\documentclass[11pt]{article}
\usepackage{setspace}
\usepackage[hmargin=3cm,vmargin=3cm]{geometry}
\usepackage[numbers]{natbib}
\usepackage{caption}
\captionsetup{font=small,labelfont=bf}
\usepackage{ae}
\usepackage{aecompl}
\usepackage{subfig}
\usepackage{mathrsfs} 
\usepackage[unicode=true]{hyperref}
\usepackage{breakurl}
\usepackage{afterpage}
\usepackage{floatpag}
\usepackage{booktabs, multicol, multirow}
\usepackage{amssymb}

\usepackage{amsmath, amsthm, amssymb,fullpage}
 \theoremstyle{plain}
  \theoremstyle{definition}
  
  \theoremstyle{plain}

\usepackage{epsfig,psfrag,url,verbatim}
\usepackage{graphicx}
\usepackage{graphicx, algorithm}
\usepackage[T1]{fontenc}
\usepackage{pdfpages}

\usepackage{xargs}
\usepackage{amsfonts}

\usepackage{color}

\newtheorem{cor}{Corollary}

\newtheorem{prop}{Proposition}

\newtheorem{rem}{Remark}




\newcommand{\diag}{\mathrm{diag}}

\newcommand{\B}{\boldsymbol}
\newcommand{\M}{\mathbf}
\newcommand{\R}{\mathrm}

\newcommand{\sbt}{\mathrm{s. t.}}
\newcommand{\rnk}{\mathrm{rank}}
\newcommand{\tr}{\mathrm {tr}}

\newcommand{\p}[1]{\ensuremath{\B\phi^{#1}}}

 \newenvironment{myarray}[2][1]
  {\array{#2}}
  {\endarray}

\DeclareMathOperator*{\argmin}{arg\,min}

\DeclareMathOperator*{\mini}{minimize}


%

\setlength{\bibsep}{0pt plus 0.3ex}

\begin{document}

\title{Computation of the Maximum Likelihood estimator in low rank Factor Analysis}

\author{Koulik Khamaru \thanks{Department of Statistics, University of California Berkeley. \texttt{koulik@berkeley.edu}} \and  Rahul Mazumder \thanks{MIT Sloan School of Management, Operations Research Center and Center for Statistics; Massachusetts Institute of Technology. \texttt{rahulmaz@mit.edu}}} 

\maketitle

\abstract{Factor analysis, a classical multivariate statistical technique is popularly used as a fundamental tool for dimensionality reduction in statistics, econometrics and data science. 
Estimation is often carried out via the Maximum Likelihood (ML) principle, which seeks to maximize the likelihood under the assumption that the positive definite covariance matrix can be decomposed as the sum of a low rank positive semidefinite matrix and a diagonal matrix with nonnegative entries. This leads to a challenging rank constrained nonconvex optimization problem. We reformulate the low rank ML Factor Analysis problem as a nonlinear nonsmooth semidefinite optimization problem,
study various structural properties of this reformulation and propose fast and scalable algorithms based on difference of convex (DC) optimization. 
Our approach has computational guarantees, gracefully scales to large problems, is applicable to situations where the sample covariance matrix is rank deficient and adapts to variants of the ML problem with additional constraints on the problem parameters. Our numerical experiments demonstrate the significant usefulness of our approach over existing state-of-the-art approaches.}

%
%
%
%
%

\section{Introduction}\label{sec:intro1}
Factor Analysis (FA)~\citep{anderson2003, Bartholomew_Knott_Moustaki_2011,mardia}, arguably a fundamental dimensionality reduction technique for multivariate data 
has been around for more than a hundred years. FA is popularly used 
to understand the correlation structure among a collection of observed variables in terms of a smaller number of common factors.
In a typical FA model, we assume that the (mean centered) observed random vector $\mathbf{x} \in \Re^{p\times 1}$ may be expressed in the form: $\M{x} = \M{L} \M{f} + \M{u}$,
where, $\M{L} := ((\ell_{ij})) \in \Re^{p \times r}$ is a matrix of factor loadings, $\M{f} \in \Re^{r \times 1}$ is a random vector of scores 
and $\M{u} \in \Re^{p\times 1}$ is a vector of uncorrelated variables.
We assume that $\mathbf{f}$ and $\mathbf{u}$ are uncorrelated, have zero means and 
without loss of generality we set $\mathrm{Cov}(\mathbf{f})= \mathrm{I_r}$---this leads to the following decomposition:  
\begin{equation}\label{fa-decomp-1}
\R{Cov}(\mathbf{x}) :=\B\Sigma = \M{L} \M{L}^\top  +  \M{\Psi},
\end{equation} 
where,  $\mathrm{Cov}(\mathbf{u}):=\B{\Psi} =\diag(\psi_1,\ldots, \psi_p)$.
Decomposition~\eqref{fa-decomp-1} suggests that the population covariance matrix $\B\Sigma:=((\sigma_{ij}))$, can be written as the sum of a low rank positive semidefinite (PSD) matrix and a diagonal matrix $\B\Psi$ with nonnegative entries. In particular, this implies that $\R{Var}(x_i)$ can be decomposed in two parts
$\R{Var}(x_i)= \sigma_{ii}^{2}=  \sum^{r}_{k=1}\ell_{ik}^{2}  + \psi_{i},$ 
where, $\sum^{r}_{k=1} \ell_{ik}^{2}$ represents the variance of $x_i$ which is shared with other variables via common factors (\textit{communality}) and $\psi_{i}$ represents the variance of $x_i$ not shared with other variables (\textit{specific} or \textit{unique variance}).

One of the most popular FA estimation methods is the Maximum Likelihood (ML) procedure \cite{mardia,anderson2003,Bartholomew_Knott_Moustaki_2011}.
Given $n$ multivariate samples $\M{x}_{i}, i = 1, \ldots, n$, which are assumed to be mean-centered, the task is to minimize the negative log-likelihood 
with respect to $\B\Sigma$ that is of the form~\eqref{fa-decomp-1}. This leads to the following optimization problem:
\begin{equation}\label{obj-2-0-0}
\begin{aligned}
\mini~~~& {\mathcal L}(\B\Sigma):=\;\;   - \log\det(\B\Sigma^{-1}) + \R{tr}(\M\Sigma^{-1}\M{S})   \\
\sbt~~~& \M{\Sigma} = \M{\Psi} + \M{L} \M{L}^\top   \\
& \B{\Psi}=\mathrm{diag}(\psi_1, \ldots, \psi_p) \succeq \epsilon \M{I}, 
\end{aligned}
\end{equation}
where,   $\M{S}=\frac{1}{n} \sum_{i=1}^{n} \M{x}_i \M{x}^\top_i$ is the sample covariance matrix; $\M{\Psi} \in \Re^{p \times p},\M{L} \in \Re^{p \times r}$ and $\B\Sigma$
 are the optimization variables; 
the notation $\M{A} \succeq \M{B}$ means that $\M{A} - \M{B}$ is PSD; and $\tr(\cdot)$ is the usual trace operator.  Here, $\epsilon$ is a small positive constant, specified a-priori to ensure that Problem~\eqref{obj-2-0-0} is bounded below\footnote{Indeed, $\B\Psi \succeq \epsilon \M{I}$ implies that 
$\B\Sigma \succeq \epsilon \M{I} \succ \M{0}$. Thus,  $-\log \det(\B\Sigma^{-1}) \geq p\log(\epsilon)$ and $ \tr \left(\B\Sigma ^{-1} \M{S}\right) \geq 0$ which shows that Problem~\ref{obj-2-0-0} is bounded below. Note that Problem~\eqref{obj-2-0-0} with $\epsilon=0$ need not have a finite solution, i.e., the ML solution need not exist. 
 Note that if one of the $\psi_i \rightarrow  +\infty$ then ${\mathcal L}(\B\Sigma) \rightarrow \infty$, a similar argument applies if $\M{L}\M{L}'$ becomes unbounded.
 Thus the infimum of Problem~\eqref{obj-2-0-0} is attained whenever $\epsilon>0$.}. We note that Problem~\eqref{obj-2-0-0} can be rewritten in terms of a new variable 
 $\B\Theta = \M{L}\M{L}' \in \Re^{p \times p}$ and placing a rank constraint on $\B\Theta$ as: $\rnk(\B\Theta) \leq r$ -- hence, in what follows, we will often refer to Problem~\eqref{obj-2-0-0} as a rank constrained
 optimization problem. 
 

Observe that Problem~\eqref{obj-2-0-0} is nonconvex in $\B\Sigma$ since (a) the objective function
${\mathcal L}(\B\Sigma)$ is \emph{not} convex~\citep{BV2004} in $\B\Sigma$ and (b) the equality constraint 
$\B\Sigma = \B\Psi + \B{\R{L}} \B{\R{L}}^\top$ is nonconvex.

\paragraph{Related work:} FA  has a long tradition that dates back to more than a hundred years~\cite{spearman1904}. We present here a selective overview 
that is relevant for this work---important contributions in FA have been nicely documented in~\cite{mardia,anderson2003,Bartholomew_Knott_Moustaki_2011,bai-ng-2008large-review}. 
Despite being a problem of fundamental importance in statistical estimation, not much is known about its computational 
properties. Unfortunately, popular off-the-shelf implementations for ML factor analysis (as available in Matlab and R that are in routine use) are quite unstable\footnote{We have observed this in our experiments and this is also reported in our section on numerical experiments.} -- they are based on rather ad-hoc computational algorithms and lead to negative variance estimates which are problematic from a 
statistical inferential viewpoint. 
This is perhaps not surprising, given that the basic problem underlying ML factor analysis is a difficult (nonconvex) 
optimization problem and there has been limited work in developing mathematical optimization based algorithms for this problem.
It is also difficult to generalize existing algorithms in the presence of additional constraints, depending upon the problem/application context. 
\cite{lawley1971factor,anderson2003} present a nice overview of classical algorithms used for ML factor analysis. 
Some modern computational approaches for ML factor analysis are based on the seminal contribution of~\cite{joreskog1967some}. 
This approach requires $\M{S}$ to be of full rank and one needs to assume that $\M{L}$ has exactly rank $r$.
 It applies a (rather ad-hoc) gradient descent like algorithm w.r.t $\B\Psi$.
Recently~\cite{robertson2007maximum} provide necessary and sufficient conditions for existence of a solution of Problem~\eqref{obj-2-0-0} with $\epsilon=0$, however, they do not provide any computational algorithm for solving Problem~\eqref{obj-2-0-0}. 
Another popular approach for ML factor analysis is based on the EM algorithm\cite{rubin1982algorithms,bishop06:_patter_recog_machin_learn}. Publicly available implementations of the EM-type methods apply to the case where, $n$ is smaller than $p$ and hence $\M{S}$ is not full rank.


Not all methods in FA are based on the ML framework.  
In other approaches, one seeks to estimate a matrix $\B\Sigma$ 
of the form $\B\Sigma = \B\Psi + \B{\R{L}}\B{\R{L}}^\top  $, which is close to the sample covariance matrix $\M{S}$ in terms of some 
metric (for e.g., the Frobenius norm: $\| \M{S} - \B\Sigma \|_{F}$). Some popular methods
in the literature are minimum residual FA, principal axis, principal component method, minimum trace FA, among others ---see
\cite{shapiro-inference-FA-02,Bartholomew_Knott_Moustaki_2011,JMLR:v18:15-613}
 for more description on these approaches.
Fairly recently~\cite{JMLR:v18:15-613} proposed integer optimization based methods for the problem of minimizing
$\| \M{S} - \B\Sigma \|_{F}^2$, where, $\B\Sigma$ is of the form~\eqref{fa-decomp-1} with an additional restriction
of $\M{S} - \B\Sigma \succeq \M{0}$.
 \cite{venkat-2012-factor} study the noiseless decomposition for the FA problem, using nuclear norm relaxations of the rank constraint on $\B\Theta:=\M{L} \M{L}^\top$.
The aforementioned line of work is different from the ML criterion in FA as the data-fidelity measure is different. 
In this paper, our focus is on the computational properties of the ML problem~\eqref{obj-2-0-0}.

\paragraph{Contributions:}
The main contributions of this paper can be summarized as follows:
\begin{enumerate}
\item[1.] We propose a new computational framework for the task of (Gaussian) Maximum Likelihood estimation in Factor Analysis -- a problem central to classical and modern multivariate statistical learning.
The associated optimization problem is challenging: it is given by the minimization of a nonconvex function subject to a rank constraint and additional semidefinite constraints.
 We reformulate the problem as the minimization of a nonsmooth nonconvex function of eigenvalues of a positive semidefinite matrix, subject to (simple) polyhedral constraints.

\item[2.] Using (convexity) properties of spectral functions, we show that the objective function can be expressed as a difference of convex functions; and is hence amenable to computational techniques
in difference of convex optimization (see for example~\cite{hiriart1985generalized}). The computational bottleneck of our algorithm is 
a low rank singular value decomposition (SVD) of a $p \times p$ matrix that needs to be performed for every iteration -- exploiting problem structure, we show that this can be computed 
with cost $O(\min\{n,p\}^2 \max\{n,p\})$.
 An important advantage of our proposal, when compared to many other commonly used, publicly available implementations; is that it applies to the case where the sample covariance 
matrix is rank deficient. 

\item[3.] We explore computational guarantees of our proposed algorithm in terms of reaching a first order stationary point. 
We demonstrate that on a series of numerical examples (including both real and synthetic datasets), our method significantly outperforms commonly used approaches 
for ML factor analysis (in terms of superior numerical stability, smaller computation times and obtaining solutions with better objective values). 
To our knowledge, our proposal is one of the most scalable computational mathematical optimization-based approaches for factor analysis. 
Our approach also generalizes to instances of ML factor analysis where, $\B\Psi$ is not necessarily diagonal. 


%

\end{enumerate}

\paragraph{Notation:} 
For a real symmetric matrix $\M{A}_{p \times p}$, we will denote by $\B\lambda(\M{A})$, the vector of real eigenvalues of $\M{A}$, i.e., 
$(\lambda_{1}(\M{A}), \lambda_{2}(\M{A}), \ldots, \lambda_{p}(\M{A}))$ with $\lambda_{i}(\M{A})\geq \lambda_{i+1}(\M{A})$ for all $i$. For a real postive semidefinite (PSD) matrix 
$\M{A}$ we use $\M{A}^{\frac12}$ to denote its symmetric square root. If $\M{A}$ is invertible and PSD, then we use $\M{A}^{-\frac12}$ to denote the square root 
of $\M{A}^{-1}$. The matrix $\M{I}$ denotes the identity matrix (with dimension determined from the context, unless otherwise mentioned). 
For a vector $\M{a}$, we use the notation $\M{a} \geq \M{0}$ to denote component-wise inequality; for a matrix $\M{A}$, we use 
the notation $\M{A} \succeq \M{0}$ (or $\succ \M{0}$) to denote that the matrix $\M{A}$ is positive semidefinite (respectively, positive definite).
We will assume all diagonals of $\M{S}$ are strictly greater than zero. 
For a non-negative integer $m$ we denote $1, \ldots, m$ by $[m]$.

\section{Methodology} \label{sec:method}

We state a simple result (the proof of which is omitted) regarding the eigenvalues of the product of two matrices -- a property that is used throughout the paper.
\begin{prop}\label{prop-eigAB=eigBA}
For any two symmetric matrices $\M{A}_{p\times p}$ and $\M{B}_{p \times p}$, we have $\B\lambda\left( \M{AB}\right) = \B\lambda \left( \M{BA} \right)$.
\end{prop}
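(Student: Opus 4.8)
The plan is to reduce everything to a single fact: the matrices $\M{A}\M{B}$ and $\M{B}\M{A}$ have the same characteristic polynomial. Granting this, they have the same list of eigenvalues counted with algebraic multiplicity, and since $\B\lambda(\cdot)$ is by definition exactly this list arranged in nonincreasing order, we obtain $\B\lambda(\M{A}\M{B}) = \B\lambda(\M{B}\M{A})$. Note that for two symmetric matrices the product need not be symmetric, so a priori these eigenvalues could be complex; this causes no trouble, since the argument only ever compares characteristic polynomials.

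I would establish the characteristic-polynomial identity in two stages. First, suppose $\M{A}$ is invertible. Then $\M{B}\M{A} = \M{A}^{-1}(\M{A}\M{B})\M{A}$, so $\M{A}\M{B}$ and $\M{B}\M{A}$ are similar, and similar matrices share a characteristic polynomial. Second, for a general (possibly singular) symmetric $\M{A}$, set $\M{A}_t := \M{A} + t\M{I}$; this is invertible for every $t$ outside the finite set $\{-\lambda_i(\M{A}) : i \in [p]\}$, so $\det(\zeta \M{I} - \M{A}_t\M{B}) = \det(\zeta\M{I} - \M{B}\M{A}_t)$ there. For each fixed $\zeta$, both sides are polynomials in $t$ (the entries of $\M{A}_t\M{B}$ and of $\M{B}\M{A}_t$ are affine in $t$) that agree for all but finitely many $t$, hence agree for every $t$; taking $t = 0$ gives the claim for $\M{A}$ itself.

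A cleaner, perturbation-free alternative is a Schur-complement computation: for $\zeta \neq 0$, evaluate the determinant of the $2p \times 2p$ block matrix $\left(\begin{smallmatrix} \zeta\M{I} & \M{A} \\ \M{B} & \M{I}\end{smallmatrix}\right)$ in two ways. Eliminating the $(2,2)$ block $\M{I}$ yields $\det(\zeta\M{I} - \M{A}\M{B})$, while eliminating the $(1,1)$ block $\zeta\M{I}$ yields $\zeta^{p}\det(\M{I} - \zeta^{-1}\M{B}\M{A}) = \det(\zeta\M{I} - \M{B}\M{A})$. Equating the two expressions shows the two characteristic polynomials agree for all $\zeta \neq 0$, hence identically (they are monic of degree $p$), which again finishes the proof.

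The only point requiring a little care is conceptual rather than computational: one must compare the eigenvalues of $\M{A}\M{B}$ and $\M{B}\M{A}$ as multisets — equivalently, via characteristic polynomials — and not merely as sets, since a naive argument could miss a multiplicity. Once the characteristic polynomials are shown equal, the sorted-eigenvalue interpretation of $\B\lambda(\cdot)$ makes the conclusion immediate, and the remaining ingredients (similarity invariance of the characteristic polynomial, the Schur-complement identity, and the polynomial-identity step) are entirely standard. In the applications of this proposition within the paper one of the two matrices is typically positive semidefinite, so the relevant products have real spectra and $\B\lambda(\cdot)$ is genuinely real-valued; but the proposition does not need this.
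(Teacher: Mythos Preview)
Your argument is correct: showing that $\M{A}\M{B}$ and $\M{B}\M{A}$ share a characteristic polynomial (via similarity when $\M{A}$ is invertible and a density/continuity argument otherwise, or via the block-matrix Schur-complement trick) is the standard route, and your remark that the multiset of eigenvalues---not just the set---is what must be compared is exactly the right caveat. You also correctly flag that $\M{A}\M{B}$ need not be symmetric and may in principle have complex eigenvalues, so the sorted-real-eigenvalue convention for $\B\lambda(\cdot)$ is being stretched a little; the paper's actual uses of the proposition (Corollary~\ref{corr-eigS*Phi=eigPhi*S} and its descendants) always involve at least one PSD factor, which guarantees real spectra and makes the convention unambiguous.

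As for comparison with the paper: there is nothing to compare. The paper states the result and explicitly omits the proof, treating it as a standard linear-algebra fact. Your write-up supplies precisely the elementary argument one would expect in its place, and either of your two variants would be appropriate if a proof were to be included.
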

A simple corollary of the above that is used widely in the paper is:
\begin{cor} \label{corr-eigS*Phi=eigPhi*S}
For any  PSD matrix $\B\Phi$, we have:
\begin{equation} 
\B\lambda \left( \B{\Phi}^{\tfrac{1}{2}}\M{S}\B{\Phi}^{\tfrac{1}{2}} \right) = \B\lambda \left( \M{S}\B{\Phi} \right) = \B\lambda \left( \B{\Phi} \M{S} \right) = \B\lambda \left( \M{S}^{\tfrac{1}{2}}\B{\Phi}\M{S}^{\tfrac{1}{2}} \right)
\end{equation}
\end{cor}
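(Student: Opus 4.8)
The plan is to reduce the four-way equality to a single ``sandwich'' identity and then obtain that identity from Proposition~\ref{prop-eigAB=eigBA}. First, applying Proposition~\ref{prop-eigAB=eigBA} to the symmetric matrices $\M{S}$ and $\B\Phi$ gives at once $\B\lambda(\M{S}\B\Phi) = \B\lambda(\B\Phi\M{S})$, which is the middle equality. It then suffices to prove $\B\lambda(\M{S}^{1/2}\B\Phi\M{S}^{1/2}) = \B\lambda(\M{S}\B\Phi)$; indeed, interchanging the roles of $\M{S}$ and $\B\Phi$ (both are PSD, so both square roots exist) yields $\B\lambda(\B\Phi^{1/2}\M{S}\B\Phi^{1/2}) = \B\lambda(\B\Phi\M{S})$ by the same argument, and chaining the three displays establishes the corollary. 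Note that $\M{S}^{1/2}\B\Phi\M{S}^{1/2}$ is symmetric PSD, so its ordered eigenvalue vector is well defined; the corollary will, in particular, also show that $\M{S}\B\Phi$ (a product of two PSD matrices) has a well-defined real ordered spectrum.

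For the remaining identity I would use that Proposition~\ref{prop-eigAB=eigBA} is the symmetric special case of the elementary fact that, for any square matrices $\M{X},\M{Y}$ of equal size, $\M{X}\M{Y}$ and $\M{Y}\M{X}$ have the same characteristic polynomial (equivalently, the same eigenvalues with multiplicities). Taking $\M{X} = \M{S}^{1/2}$ and $\M{Y} = \M{S}^{1/2}\B\Phi$ gives $\M{X}\M{Y} = \M{S}\B\Phi$ and $\M{Y}\M{X} = \M{S}^{1/2}\B\Phi\M{S}^{1/2}$, so these two matrices share their characteristic polynomial; since the latter is symmetric, all its eigenvalues are real, and therefore $\B\lambda(\M{S}\B\Phi) = \B\lambda(\M{S}^{1/2}\B\Phi\M{S}^{1/2})$ as ordered real vectors.

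If one prefers to invoke Proposition~\ref{prop-eigAB=eigBA} exactly as stated, the same conclusion follows by a perturbation argument: when $\M{S}\succ\M{0}$, the identity $\M{S}^{-1/2}\big(\M{S}^{1/2}\B\Phi\M{S}^{1/2}\big)\M{S}^{1/2} = \B\Phi\M{S}$ shows that $\M{S}^{1/2}\B\Phi\M{S}^{1/2}$ and $\B\Phi\M{S}$ are similar, hence $\B\lambda(\M{S}^{1/2}\B\Phi\M{S}^{1/2}) = \B\lambda(\B\Phi\M{S}) = \B\lambda(\M{S}\B\Phi)$, the last step by Proposition~\ref{prop-eigAB=eigBA}; for a general, possibly rank-deficient, PSD matrix $\M{S}$ one replaces $\M{S}$ by $\M{S}+\delta\M{I}\succ\M{0}$, applies the previous case, and lets $\delta\downarrow 0$, using continuity of the matrix square root on the PSD cone and continuity of the ordered-eigenvalue map on symmetric matrices. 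The only delicate point is exactly this last step: the slick conjugation by $\M{S}^{1/2}$ is not available when $\M{S}$ (or $\B\Phi$) is singular -- the case that matters most here, since the paper specifically targets rank-deficient $\M{S}$ -- so either the characteristic-polynomial viewpoint or the limiting argument is needed to cover it.
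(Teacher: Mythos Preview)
The paper does not give a proof of this corollary; it simply labels it ``a simple corollary'' of Proposition~\ref{prop-eigAB=eigBA} and moves on. Your argument is correct and supplies exactly the missing step: Proposition~\ref{prop-eigAB=eigBA} as stated requires both factors to be symmetric, so writing $\M{S}^{1/2}\B\Phi\M{S}^{1/2}=(\M{S}^{1/2})(\M{S}^{1/2}\B\Phi)$ does not fall under it directly, and you close that gap either by invoking the general characteristic-polynomial identity $\chi_{\M{X}\M{Y}}=\chi_{\M{Y}\M{X}}$ or by the similarity $\M{S}^{-1/2}(\M{S}^{1/2}\B\Phi\M{S}^{1/2})\M{S}^{1/2}=\B\Phi\M{S}$ combined with a limit in $\delta$ for the singular case. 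Either route is sound, and your observation that the singular case is the one actually relevant to the paper is well taken.
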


\subsection{Reformulations}\label{sec:reform}
In this section we present a reformulation of the low rank optimization Problem~\eqref{obj-2-0-0} to one that does not involve any (combinatorial) rank constraint: Proposition~\ref{lem:reform-1} reformulates Problem~\eqref{obj-2-0-0} as an optimization problem in $ \B\Psi $. The resulting problem~\eqref{obj-2-0}
is amenable to efficient optimization techniques based on difference of convex optimization. 
\textcolor{black}{In Proposition~\ref{bdd-away-from-zero} we provide a characterization of an optimal solution of Problem~\eqref{obj-2-0}.}

\begin{prop}\label{lem:reform-1}
({\bf{a}}) Problem~\eqref{obj-2-0-0} is equivalent to:
\begin{equation}\label{obj-2-0}
\begin{aligned}
 \;\;\;\;\;\;\;  \mini \limits_{}\;\; & 
\displaystyle  \left \{ \log \det(\B{\Psi}) + \R{tr} \left( \M{S}^{*} \right) + \sum^{r}_{i=1} \left( \log(\max \{ 1,\lambda_i^{*}\}) -\max\{1,\lambda_i^{*}\} +1 \right) \right\} \\
\sbt \;\;\;&  \M{\Psi}=\mathrm{diag}(\psi_1, \ldots, \psi_p) \succeq \epsilon \M{I},
\end{aligned}
\end{equation}
where,  $\lambda_1^{*} \geq \lambda_2^{*} \geq \ldots \geq \lambda_p^{*}$ denote the eigenvalues of 
$\M{S}^{*} := \M{\Psi}^{-\tfrac{1}{2}} \M{S} \M{\Psi}^{-\tfrac{1}{2} }$; and the optimization variables are $\B\Psi,\M{S}^*$ and $\{\lambda^*_{i}\}_{i \geq 1}$.

({\bf{b}}) Suppose $\hat{\B{\Psi}}$ is a minimizer of Problem~\eqref{obj-2-0} and $\hat{\M{L}} =  {\hat{\B{\Psi}}}^{\tfrac{1}{2}} \left[ \hat{\M{z}}_1,  \hat{\M{z}}_2 ,  \ldots , \hat{\M{z}}_r \right]$
where, $  \hat{\M{z}}_1,  \hat{\M{z}}_2 ,  \ldots ,  \hat{\M{z}}_r $  are eigenvectors of
$\M{\hat{\Psi}}^{-\tfrac{1}{2}} \M{S} \M{\hat{\Psi}}^{-\tfrac{1}{2} }$ 
corresponding to its top $r$ eigenvalues
$\hat{\lambda}_i^{*}, i \leq r$ with $\|  \hat{\M{z}}_i \|^{2} = \max \left\{ 1,\hat{\lambda}_i^{*} \right\} -  1$ for all $i=1, \ldots, r$. Then 
$(\hat{\B\Psi}, \hat{\M{L}})$ is a minimizer of Problem~\eqref{obj-2-0-0}.

\begin{proof}
Part (a): We first minimize Problem~\eqref{obj-2-0-0} with respect to $\M{L}$ for a fixed value of $\M{\Psi}$. 
A simple application of the Sherman Woodbury formula, with some rearrangement gives:
\begin{equation}\label{sherman-woodbury-1}
\begin{myarray}[1.1]{ r c l}
\B\Sigma^{-1} &=& \left(\B{\Psi} + \M{L} \M{L}^\top \right)^{-1} \\
& =& \B\Psi^{-1} - \B\Psi^{-1} \M{L} \left(\M{I} + \M{L}^\top\B\Psi^{-1} \M{L}\right)^{-1} \M{L}^\top \B\Psi^{-1}\\
& =& \B\Psi^{-1} -  \B\Psi^{-\frac12}\B\Psi^{-\frac12}\M{L}(\M{I} + \M{L}^\top\B\Psi^{-\frac12}\B\Psi^{-\frac12} \M{L})^{-1} \M{L}^\top \B\Psi^{-\frac12} \B\Psi^{-\frac12}
\end{myarray}
\end{equation}
Writing $\M{L}^{*} = \M{\Psi}^{-\tfrac{1}{2}} \M{L}$ in the last line of display~\eqref{sherman-woodbury-1}, we get:
$$ \B\Sigma^{-1} = \B\Psi^{-1} -  \B\Psi^{-\frac12}  \M{L}^{*} (\M{I} + (\M{L}^{*})^\top\M{L}^{*})^{-1} (\M{L}^{*})^\top \B\Psi^{-\frac12}. $$

The above implies that:
\begin{equation}\label{line-prop2-2}
\begin{aligned}
\tr(\B\Sigma^{-1}\M{S}) =& \tr(\B\Psi^{-1} \M{S}) - \tr \left( \B\Psi^{-\frac12}  \M{L}^{*} \left(\M{I} + (\M{L}^{*})^\top\M{L}^{*} \right)^{-1} (\M{L}^{*})^\top \B\Psi^{-\frac12} \M{S} \right) \\
=& \tr(\B\Psi^{-\frac12} \M{S}\B\Psi^{-\frac12}) - \tr \left( (\M{L}^{*})^\top \B\Psi^{-\frac12}\M{S} \B\Psi^{-\frac12}  \M{L}^{*} \left(\M{I} + (\M{L}^{*})^\top\M{L}^{*} \right)^{-1}   \right) \\
=&  \tr(\M{S}^*) - \tr \left((\M{L}^{*})^\top \M{S}^*\M{L}^{*}  \left(\M{I} + (\M{L}^{*})^\top\M{L}^{*} \right)^{-1} \right)~~~~~\text{(Using, $\M{S}^* = \B\Psi^{-\frac12} \M{S}\B\Psi^{-\frac12}$)}
\end{aligned}
\end{equation}
where, in the second and third lines of display~\eqref{line-prop2-2} we (repeatedly) used the fact that $\tr(AB) = \tr(BA)$. 

In addition, note that:
\begin{equation}\label{line-prop2-3}
\begin{myarray}[1.1]{r c l}
-\log\det(\B\Sigma^{-1}) = \log\det(\B\Sigma) &=& \log\det(\B\Psi + \M{L} \M{L}^\top) \\
&=& \log\det(\B\Psi) + \log\det\left(\M{I} + {\M{L}^{*}}^\top \M{L}^* \right) 
\end{myarray}
\end{equation}
Using~\eqref{line-prop2-2},~\eqref{line-prop2-3} in the objective function of Problem~\eqref{obj-2-0-0}, i.e., 
${\mathcal L}(\B\Sigma) = -\log\det(\B\Sigma^{-1}) + \tr(\B\Sigma^{-1}\M{S})$ we get the following equivalent reformulation of Problem~\eqref{obj-2-0-0}:
\begin{equation}\label{eqn2-0-1}
\begin{myarray}[1.1]{c l l}
\mini~~& h(\M{\Psi},\M{L}):=& \log \det(\M{\Psi}) + \log\det\left(\M{I} + {\M{L}^{*}}^\top \M{L}^* \right) + \R{tr} \left( \M{S}^{*} \right) \\
&                                            &  - \R{tr}\left({\M{L}^{*}}^\top {\M{S}^*} \M{L}^* \left(\M{I} + {\M{L}^{*}}^\top \M{L}^*\right)^{-1}\right) \\
\sbt~~&& \M{\Psi}=\mathrm{diag}(\psi_1, \ldots, \psi_p) \succeq \epsilon \M{I},
\end{myarray}
\end{equation}
where, recall we use the notation: $\M{L}^{*} = \M{\Psi}^{-\tfrac{1}{2}} \M{L} $ and $\M{S}^{*} = \M{\Psi}^{-\tfrac{1}{2}} \M{S} \M{\Psi}^{-\tfrac{1}{2} }$; and the optimization variables in 
Problem~\eqref{eqn2-0-1} are $\B\Psi, \M{L}$ (and consequently, $\M{L}^*, \M{S}^{*}$).
Note that $h\left(\M{\Psi},\M{L}\right) = h(\M{\Psi},\M{L} \M{U}) $ for any orthogonal matrix $\M{U}$. So we can  substitute  $\M{L}$ by $ \M{L} \M{U }$ 
in Problem~\eqref{eqn2-0-1}. We choose $\M{U}$ such that the columns of $\M{L}^*$ are orthogonal or zero vectors. 
Note that the derivative of $h(\M{\Psi},\M{L})= \log\det(\M{L}\M{L}^\top + \B\Psi) + \tr((\M{L}\M{L}^\top + \B\Psi)^{-1} \M{S})$ 
w.r.t. $\M{L}$ is given by:
\begin{equation}
\frac{\partial h(\M{\Psi},\M{L}) }{\partial \M{L}} = -2\B{\Sigma}^{-1}\left( \M{\Sigma} - \M{S} \right) \B{\Sigma}^{-1}\M{L}. \label{stationary-eqn-lambda-1}
\end{equation}
Setting the above gradient to be zero and writing $\B\Sigma = \B\Psi + \M{L} \M{L}^\top$ we have $\M{L} = \M{S}( \B\Psi + \M{L} \M{L}^\top)^{-1}\M{L}$.
After some elementary algebra this can be written as:
\begin{equation}
\M{S}^{*}\M{L}^{*}\left( \M{I} + {\M{L}^{*^\top}}\M{L}^*  \right)^{-1} = \M{L}^{*} . \label{stationary-eqn-lambda-2}
\end{equation}
Since we choose the columns of $\M{L}^*$ to be pairwise orthogonal or zero vectors, it follows that $\M{I} + {\M{L}^{*\top}} \M{L}^*$ is a diagonal matrix. This means that \eqref{stationary-eqn-lambda-2} is a collection of eigenvector equations for the matrix $\M{S}^*$. From condition~\eqref{stationary-eqn-lambda-2} we have that at an optimal value of $\M{L}$, the columns of $\M{L}^*$ are either pairwise orthogonal eigenvectors of $\M{S^{*}}$ with eigenvalues as the diagonal entries of $ \M{I} + \M{L}^{*\top}{\M{L}^*}$  or they are zero vectors. Denoting the columns of $\M{L}^*$ by $\M{z}_i,i=1,\ldots,r $ the part of the function $h(\M{\Psi},\M{L})$ in display~\eqref{eqn2-0-1}, that depends upon $\M{L}$ is given by:
\begin{equation} \label{eqn2-0-2}
g(\M{L})=\sum_{i=1}^{r} \left(\log(1 + \M{z}_i^\top \M{z}_i) - \frac{\M{z}_i^\top \M{S}^* \M{z}_i}{1 + \M{z}_i^\top \M{z}_i} \right).
\end{equation}
Since ${\M{z}_i}$s are pairwise orthogonal or zero vectors, it follows from equation \eqref{stationary-eqn-lambda-2} that
\begin{equation}
\M{S}^*\M{z}_i=\beta_i \M{z}_i , \text{ with } \beta_i=1+\M{z}_i^\top \M{z}_i . \label{stationary-eqn-lambda-simple}
\end{equation}
Note that in the above equation, either $\beta_i =1 $ with $\M{z}_i = 0 $ or $\beta_i > 1$ and $\beta_i $ equals some eigenvalue of $\M{S}^{*}$ with eigenvector $\M{z}_i$ --- 
thus~\eqref{eqn2-0-2} becomes
 \begin{equation} \label{eqn2-0-3}
 g(\M{L})=\sum_{i=1}^{r} \left(\log(\beta_i) - \beta_i +1 \right).
 \end{equation}
Note that $\beta \mapsto \log(\beta) - \beta +1 $ is strictly decreasing for all $\beta \geq 1 $. So it is easy to see that \eqref{eqn2-0-3} is minimized for $\beta_i = \max \{ 1,\lambda_i^{*} \}$ for 
$1\leq i \leq r,$ where, $\lambda_1^{*} \geq \ldots \geq \lambda_r^{*} $ are the top $r$ eigenvalues of $\M{S}^{*}$. 
The optimal choice of $\M{z}_i$ is given by  $\M{z}_i = 0$ when $\beta_i=1$ and when $\beta_i > 1$, $\M{z}_i$ is
 an eigenvector of $\M{S}^{*}$ with eigenvalue $\lambda_i^{*}$ and with $\M{z}_i^\top \M{z}_i  = \max
  \{ 1, \lambda_i^{*} \} -1 $.
Noting that 
\begin{equation}
\begin{aligned}
\min_{\B\Psi \succeq \epsilon \M{I}, \M{L}}~~& h(\B\Psi, \M{L}) &&=&&  \min_{\B\Psi  \succeq \epsilon \M{I}}& \left\{\min_{\M{L}} \; \; h(\B\Psi, \M{L})\right\},
\end{aligned}
\end{equation}
and substituting the values of $\M{L}$ that minimize the inner minimization problem, above, into the objective function 
$h(\B\Psi, \M{L})$, we obtain~\eqref{obj-2-0}. 

Part (b): The proof of this part is a consequence of the proof of Part (a).
\end{proof}
\end{prop}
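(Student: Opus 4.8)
The plan is to establish Part~(a) by \emph{partial minimization}: for each fixed feasible $\B\Psi$ (i.e.\ diagonal with $\B\Psi \succeq \epsilon\M{I}$), first minimize the negative log-likelihood over $\M{L}$ in closed form, and then observe that the resulting value, as a function of $\B\Psi$, is exactly the objective of Problem~\eqref{obj-2-0}; formally this rests on $\min_{\B\Psi\succeq\epsilon\M{I},\,\M{L}}\mathcal L = \min_{\B\Psi\succeq\epsilon\M{I}}\big(\min_{\M{L}}\mathcal L\big)$. To make the inner problem tractable I would write $\B\Sigma = \B\Psi + \M{L}\M{L}^\top$, apply the Sherman--Morrison--Woodbury identity to $\B\Sigma^{-1}$ together with $\log\det(\M{I}+\M{A}\M{B}) = \log\det(\M{I}+\M{B}\M{A})$ for $\log\det\B\Sigma$, and perform the change of variables $\M{L}^{*} = \B\Psi^{-1/2}\M{L}$ (so that $\M{L}^\top\B\Psi^{-1}\M{L} = \M{L}^{*\top}\M{L}^{*}$ and $\B\Psi^{-1/2}\M{S}\B\Psi^{-1/2} = \M{S}^{*}$, using Corollary~\ref{corr-eigS*Phi=eigPhi*S} for the relevant spectral identities). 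This brings the objective to
\begin{equation*}
h(\B\Psi,\M{L}) = \log\det\B\Psi + \tr(\M{S}^{*}) + \log\det\big(\M{I} + \M{L}^{*\top}\M{L}^{*}\big) - \tr\big(\M{L}^{*\top}\M{S}^{*}\M{L}^{*}(\M{I} + \M{L}^{*\top}\M{L}^{*})^{-1}\big),
\end{equation*}
in which only the last two terms involve $\M{L}$.

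Next I would carry out the inner minimization over $\M{L}$ (equivalently over $\M{L}^{*}$). The key structural observation is that $h$ is invariant under $\M{L}\mapsto\M{L}\M{U}$ for orthogonal $\M{U}$; choosing $\M{U}$ from the thin SVD of $\M{L}^{*}$, one may assume without loss of generality that the columns $\M{z}_1,\dots,\M{z}_r$ of $\M{L}^{*}$ are pairwise orthogonal or zero. Then $\M{I}+\M{L}^{*\top}\M{L}^{*}$ is diagonal and the $\M{L}$-dependent part of $h$ decouples as $\sum_{i=1}^{r}\big(\log(1+\|\M{z}_i\|^2) - \M{z}_i^\top\M{S}^{*}\M{z}_i/(1+\|\M{z}_i\|^2)\big)$. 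I would minimize this in two stages: for fixed norms, a Courant--Fischer / Ky Fan (rearrangement) argument shows the optimal mutually orthogonal directions are the leading eigenvectors of $\M{S}^{*}$, pairing larger norm with larger eigenvalue; then, substituting $\beta_i = 1+\|\M{z}_i\|^2 \geq 1$, the scalar problem $\min_{\beta_i\geq 1}\{\log\beta_i - \lambda_i^{*} + \lambda_i^{*}/\beta_i\}$ is solved at $\beta_i = \max\{1,\lambda_i^{*}\}$ (interior stationary point $\beta_i=\lambda_i^{*}$ when $\lambda_i^{*}\geq 1$, the boundary $\beta_i=1$ otherwise), with optimal value $\log(\max\{1,\lambda_i^{*}\}) - \max\{1,\lambda_i^{*}\} + 1$ in both cases. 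Alternatively—and this is closer to how I expect the authors proceed—the same conclusion follows by setting $\partial h/\partial\M{L} = -2\B\Sigma^{-1}(\B\Sigma-\M{S})\B\Sigma^{-1}\M{L} = \M{0}$, which forces each nonzero column of $\M{L}^{*}$ to be an eigenvector of $\M{S}^{*}$ with eigenvalue $\beta_i = 1+\|\M{z}_i\|^2$, reducing the $\M{L}$-part of $h$ to $\sum_i(\log\beta_i - \beta_i + 1)$, which is minimized using that $\beta\mapsto\log\beta-\beta+1$ is strictly decreasing on $[1,\infty)$ together with the constraint that the $\beta_i$ be eigenvalues of $\M{S}^{*}$ attached to orthogonal eigenvectors.

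Substituting the optimal $\M{L}$ back yields $\min_{\M{L}}h(\B\Psi,\M{L}) = \log\det\B\Psi + \tr(\M{S}^{*}) + \sum_{i=1}^{r}\big(\log(\max\{1,\lambda_i^{*}\}) - \max\{1,\lambda_i^{*}\} + 1\big)$, and minimizing this over feasible $\B\Psi$ gives Part~(a). Part~(b) is then immediate: the reconstruction $\hat{\M{L}} = \hat{\B\Psi}^{1/2}[\hat{\M{z}}_1,\dots,\hat{\M{z}}_r]$ with $\hat{\M{z}}_i$ the top eigenvectors of $\hat{\B\Psi}^{-1/2}\M{S}\hat{\B\Psi}^{-1/2}$ scaled so that $\|\hat{\M{z}}_i\|^2 = \max\{1,\hat{\lambda}_i^{*}\} - 1$ is, by construction, precisely a minimizer of the inner problem $\min_{\M{L}}h(\hat{\B\Psi},\M{L})$; since $\hat{\B\Psi}$ attains the outer minimum, the pair $(\hat{\B\Psi},\hat{\M{L}})$ attains $\min_{\B\Psi,\M{L}}h$ and hence solves Problem~\eqref{obj-2-0-0}.

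The step I expect to be the main obstacle is making the inner minimization rigorous rather than merely formal: one must (i) justify the reduction to mutually orthogonal columns through orthogonal invariance, (ii) treat the coupling between columns via a Ky Fan / rearrangement inequality rather than minimizing the summands independently, and (iii) guarantee that the infimum over $\M{L}$ is actually attained—this needs a coercivity remark, e.g.\ each decoupled summand is bounded below by $\log(1+\|\M{z}_i\|^2) - \lambda_1^{*}$, which tends to $\infty$ as $\|\M{z}_i\|\to\infty$—so that the stationarity analysis captures the global minimum. It is also worth checking the degenerate regimes where $\M{S}$ is rank deficient (some $\lambda_i^{*}=0$) or $r > p$, in which the $\max\{1,\cdot\}$ truncation is exactly what keeps the reformulation valid.
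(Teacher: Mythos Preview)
Your proposal is correct and follows essentially the same approach as the paper: partial minimization in $\M{L}$ via the Sherman--Woodbury identity, the change of variables $\M{L}^{*}=\B\Psi^{-1/2}\M{L}$, orthogonal invariance to decouple the columns, and then a scalar optimization. The paper carries out the inner minimization exactly via your ``alternative'' route---setting $\partial h/\partial\M{L}=\M{0}$ to obtain the eigenvector equation $\M{S}^{*}\M{L}^{*}(\M{I}+\M{L}^{*\top}\M{L}^{*})^{-1}=\M{L}^{*}$ and then using monotonicity of $\beta\mapsto\log\beta-\beta+1$ on $[1,\infty)$---whereas your primary route (Courant--Fischer / Ky~Fan plus a one-variable calculus problem) is a slightly more direct variational argument that avoids relying on stationarity conditions in a nonconvex problem; your added coercivity remark (to ensure the infimum over $\M{L}$ is attained) is a point the paper leaves implicit.
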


The method of minimizing the objective function w.r.t. $\M{L}$ with $\B\Psi$ held fixed, is inspired by the classical 
work of~\cite{lawley1940,lawley1967,joreskog1967some}---this line of work however, assumes $\M{S}$ to be of full rank. 
We do not assume that $\M{S}$ is full rank. 
\cite{robertson2007maximum} investigate the existence of ML solutions 
for a general $\M{S}$ --- no algorithms for computing the solution are presented. 
The expression~\eqref{obj-2-0} derived herein, does not appear in~\cite{robertson2007maximum}. Formulation~\eqref{obj-2-0} plays a key role in
developing algorithms for Problem~\eqref{obj-2-0}, a main focus of this paper.

Proposition~\ref{bdd-away-from-zero} shows that any solution of Problem~\eqref{obj-2-0} is bounded above.

\begin{prop} \label{bdd-away-from-zero}
Let $\B{\hat{\Psi}}$ be a solution of Problem~\eqref{obj-2-0}. Then $\diag(\M{S}) \succeq \hat{\B{\Psi}} \succeq \epsilon \M{I}$.
\begin{proof}
Note that for any $\B\Psi \succ \M{0}$,   $\frac{\partial h }{\partial \B{\Psi}} = \diag \left( \B\Sigma ^{-1} \left(\B\Sigma -  \M{S}\right) \B\Sigma ^{-1} \right)$. Let $\M{L}\left( \B\Psi \right)$ be the optimal value of $\M{L}$ for fixed value of $\B\Psi$. Then from the proof of Proposition~\ref{lem:reform-1}, it follows that $\frac{\partial h}{\partial \M{L}} = 0$ at $\left( \B\Psi,\M{L} \left( \B\Psi \right) \right)$. Setting~\eqref{stationary-eqn-lambda-1} to zero, we have $\M{L}=\M{S}\B{\Sigma}^{-1}\M{L}$; and applying Sherman Woodbury formula
on $(\B\Psi + \M{L}\M{L}^\top)^{-1}$, we have the following chain of inequalities:
\begin{eqnarray}
\M{L}&=&\M{S}\B{\Sigma}^{-1}\M{L} \nonumber \\
&=& \M{S}\left( \B\Psi^{-1} - \B{\Psi}^{-1}\M{L}\left( \M{I} + \M{L}^\top \M{\Psi}^{-1}\M{L} \right)^{-1}\M{L}^\top \M{\Psi}^{-1}   \right)\M{L}\label{eqn:line-0-1}\\ 
&=& \M{S}\B\Psi^{-1}\M{L} - \M{S}\B{\Psi}^{-1}\M{L} \left( \left( \M{I} + \M{L}^\top \M{\Psi}^{-1}\M{L} \right)^{-1} \M{L}^\top \M{\Psi}^{-1}\M{L} \right)\label{eqn:line-0-2} \\
&=& \M{S}\B\Psi^{-1}\M{L} - \M{S}\B{\Psi}^{-1}\M{L} \left( \M{I} - \left( \M{I} + \M{L}^\top \M{\Psi}^{-1}\M{L} \right)^{-1}  \right)\label{eqn:line-0-3}\\
&=& \M{S}\B{\Psi}^{-1}\M{L}\left( \M{I} + \M{L}^\top \M{\Psi}^{-1}\M{L} \right)^{-1}. \label{eqn:line-0-4}
\end{eqnarray}
Eqn~\eqref{eqn:line-0-1} follows from~\eqref{sherman-woodbury-1}; Eqn~\eqref{eqn:line-0-3} follows from~\eqref{eqn:line-0-2} by using the observation that for a PSD matrix $\M{B}$ we have the 
following identity: $(\M{I}+\M{B})^{-1}\M{B} = \M{I} - (\M{I} + \M{B})^{-1}$ (which can be verified by simple algebra).

Moreover using~\eqref{sherman-woodbury-1} on $\B\Sigma^{-1}$ the expression $\M{S}\M{\Sigma}^{-1}$ simplifies as: 
\begin{eqnarray}
\M{S}\M{\Sigma}^{-1} &=& \M{S}\M{\Psi}^{-1} - \left(\M{S}\B{\Psi}^{-1}\M{L}\left( \M{I} + \M{L}^\top \M{\Psi}^{-1}\M{L} \right)^{-1} \right) \left( \M{L}^\top\M{\Psi}^{-1}\right) \nonumber \\
&=& \M{S}\M{\Psi}^{-1} - \M{L}\M{L}^\top\M{\Psi}^{-1} \nonumber ~~~ \text{(Using expression of $\M{L}$ from rhs of~\eqref{eqn:line-0-4})}  \nonumber \\
&=& \M{S}\M{\Psi}^{-1} - \left( \M{\Sigma} - \B\Psi \right) \M{\Psi}^{-1} ~~~\text{(Since, $\B\Sigma = \M{L}\M{L}^\top +\B\Psi$)}\nonumber \\
&=& \M{S}\M{\Psi}^{-1} - \M{\Sigma}\M{\Psi}^{-1} + \M{I}.  \label{eqn: S-Sigma-inv}
\end{eqnarray}
Note that we have the following expression for $\B\Sigma ^{-1} \left(\B\Sigma -  \M{S}\right) \B\Sigma ^{-1}$: 
\begin{equation}\label{line-1-1-1-1}
\begin{myarray}[1.1]{r l}
\B\Sigma ^{-1} \left(\B\Sigma -  \M{S}\right) \B\Sigma ^{-1} =& \B\Sigma ^{-1}  - \B\Sigma ^{-1} (\M{S} \B\Sigma^{-1})\\
=& \B\Sigma ^{-1} - \B\Sigma ^{-1}\left( \M{S}\M{\Psi}^{-1} - \M{\Sigma}\M{\Psi}^{-1} + \M{I}  \right) \\
=& - \left(\B\Sigma ^{-1} \M{S} \right)\M{\Psi}^{-1} + \M{\Psi}^{-1} \\
=& - \left( \M{\Psi}^{-1}\M{S} -\M{\Psi}^{-1}\B\Sigma +\M{I} \right)\M{\Psi}^{-1} + \M{\Psi}^{-1} \\
=& \B\Psi ^{-1} \left(\B\Sigma -  \M{S}\right) \B\Psi ^{-1}.
\end{myarray}
\end{equation}
where, the second line follows by using expression of $\M{S}\M{\Sigma}^{-1}$ from~\eqref{eqn: S-Sigma-inv}; and 
the fourth line follows by using the same expression for $\M{\Sigma}^{-1}\M{S}  = (\M{S}\M{\Sigma}^{-1})^\top$.

Using~\eqref{line-1-1-1-1}, the expression for $\frac{\partial h}{\partial \B{\Psi}}=\B\Sigma ^{-1} \left(\B\Sigma -  \M{S}\right) \B\Sigma ^{-1}$ reduces to
\begin{equation}\label{stationary-psi-upd}
 \frac{\partial h }{\partial \B{\Psi}} = \diag \left( \B\Sigma ^{-1} \left(\B\Sigma -  \M{S}\right) \B\Sigma ^{-1} \right) =\B\Psi^{-1} \diag \left(\B\Sigma -  \M{S}\right)  \B\Psi^{-1}.
\end{equation}
So, we conclude that $\frac{\partial h}{\partial \psi_i} > 0$ if $\psi_i > s_{ii}$ for some $1 \leq i \leq p$. We are minimizing Problem ~\eqref{obj-2-0}, so the optimal solution must satisfy $\hat{\psi}_i \leq s_{ii}$. This completes the proof.
\end{proof}
\end{prop}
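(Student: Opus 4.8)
\emph{Proof idea.} The lower bound $\hat{\B\Psi}\succeq\epsilon\M I$ is nothing but feasibility for Problem~\eqref{obj-2-0}, so the real content is the upper bound, which (since $\B\Psi$ is diagonal) says $\hat\psi_i\le s_{ii}$ for every $i$. The plan is a first-order perturbation argument: I will show that the objective of Problem~\eqref{obj-2-0}, regarded as a function of $\B\Psi$ alone, is \emph{strictly} increasing in the coordinate $\psi_i$ whenever $\psi_i>s_{ii}$. Then such a $\B\Psi$ cannot be optimal, since decreasing $\psi_i$ slightly stays feasible (we are in the regime $\epsilon\le\min_i s_{ii}$, so $\hat\psi_i=\epsilon$ already gives $\hat\psi_i\le s_{ii}$) and strictly lowers the objective.

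To obtain this derivative I would go back to the two-variable function $h(\B\Psi,\M L)$ from the proof of Proposition~\ref{lem:reform-1}, whose partial minimization over $\M L$ yields the objective of Problem~\eqref{obj-2-0}. Let $\M L(\B\Psi)$ denote the minimizer of $h(\B\Psi,\cdot)$; by the envelope principle, since $\partial h/\partial\M L=0$ at $(\B\Psi,\M L(\B\Psi))$, the gradient of the reduced objective with respect to $\B\Psi$ equals $\partial h/\partial\B\Psi$ evaluated there, i.e.\ $\diag(\B\Sigma^{-1}(\B\Sigma-\M S)\B\Sigma^{-1})$ with $\B\Sigma=\B\Psi+\M L(\B\Psi)\M L(\B\Psi)^\top$.

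The crux is to simplify $\B\Sigma^{-1}(\B\Sigma-\M S)\B\Sigma^{-1}$ using the stationarity condition in $\M L$. Setting~\eqref{stationary-eqn-lambda-1} to zero gives $\M L=\M S\B\Sigma^{-1}\M L$; combining this with the Sherman--Woodbury expansion~\eqref{sherman-woodbury-1} of $\B\Sigma^{-1}$ (and the identity $(\M I+\M B)^{-1}\M B=\M I-(\M I+\M B)^{-1}$ for PSD $\M B$) should produce the clean expression $\M S\B\Sigma^{-1}=\M S\B\Psi^{-1}-\B\Sigma\B\Psi^{-1}+\M I$, and substituting this back yields the identity $\B\Sigma^{-1}(\B\Sigma-\M S)\B\Sigma^{-1}=\B\Psi^{-1}(\B\Sigma-\M S)\B\Psi^{-1}$. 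Taking the $i$-th diagonal entry, the partial derivative is $\psi_i^{-2}(\sigma_{ii}-s_{ii})$; and since $\sigma_{ii}=\psi_i+[\M L\M L^\top]_{ii}\ge\psi_i$, this is strictly positive whenever $\psi_i>s_{ii}$, which finishes the argument.

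I expect the only real obstacle to be the matrix bookkeeping in the last paragraph: passing from $\M L=\M S\B\Sigma^{-1}\M L$ to a usable formula for $\M S\B\Sigma^{-1}$ and then to the identity $\B\Sigma^{-1}(\B\Sigma-\M S)\B\Sigma^{-1}=\B\Psi^{-1}(\B\Sigma-\M S)\B\Psi^{-1}$ requires repeated, careful applications of Sherman--Woodbury together with the auxiliary PSD identity above. The envelope step, the inequality $\sigma_{ii}\ge\psi_i$, and the handling of the boundary case $\psi_i=\epsilon$ are all routine.
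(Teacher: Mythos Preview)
Your proposal is correct and follows essentially the same route as the paper's proof: both compute $\partial h/\partial\B\Psi=\diag(\B\Sigma^{-1}(\B\Sigma-\M S)\B\Sigma^{-1})$, use the stationarity condition $\M L=\M S\B\Sigma^{-1}\M L$ together with Sherman--Woodbury and the identity $(\M I+\M B)^{-1}\M B=\M I-(\M I+\M B)^{-1}$ to derive $\M S\B\Sigma^{-1}=\M S\B\Psi^{-1}-\B\Sigma\B\Psi^{-1}+\M I$ and hence $\B\Sigma^{-1}(\B\Sigma-\M S)\B\Sigma^{-1}=\B\Psi^{-1}(\B\Sigma-\M S)\B\Psi^{-1}$, and then conclude from $\sigma_{ii}\ge\psi_i$. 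Your write-up is in fact a touch more careful than the paper's in making the envelope reasoning and the inequality $\sigma_{ii}\ge\psi_i$ explicit.
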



We present another equivalent representation of Problem~\eqref{obj-2-0}, by a simple change of variables $\B{\Phi}:= \B\Psi^{-1}$. In what follows below, unless otherwise specified, we use the shorthand  $\B{\Phi}:= \R{diag}\left( \phi_1, \ldots ,\phi_p \right)$ and $\B{\phi} = \left( \phi_1, \ldots ,\phi_p \right)$.

\begin{cor}\label{cor-obj-2-0}
Problem~\eqref{obj-2-0} is equivalent to the following optimization problem in $\B\phi$:
\begin{equation}\label{obj-2}
\begin{aligned}
\mini~~~& 
\displaystyle f(\B{\phi}):= \sum_{i=1}^{p} \left(-\log \phi_i +   s_{ii}\phi_i \right) + \sum^{r}_{i=1} \left( \log\left(\max\{1,\lambda_i^{*}\}\right) - \max \{1,\lambda_i^{*}\} +1 \right) \\
\sbt~~~& \M{0} \prec  \B{\Phi}=\mathrm{diag}(\phi_1, \ldots, \phi_p) \preceq  \frac{1}{\epsilon} \M{I},
\end{aligned}
\end{equation}
where, $\{{\lambda}^*\}_1^{r}$ are the top $r$ eigenvalues of $\M{S}^{*} = \M{\Phi}^{\tfrac{1}{2}} \M{S} \M{\Phi}^{\tfrac{1}{2}}$. If $\hat{\B\Psi}$ is a solution to Problem~\eqref{obj-2-0} then, we have
$ \hat{\B\Psi} = \hat{\B\Phi}^{-1},$ where, $\hat{\B\Phi}$ is a solution to Problem~\eqref{obj-2}.
\end{cor}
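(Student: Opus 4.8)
The plan is to verify directly that the substitution $\B{\Phi} := \B\Psi^{-1}$ transforms Problem~\eqref{obj-2-0} into Problem~\eqref{obj-2} term by term, and that the feasible region transforms correctly. First I would observe that the map $\B\Psi \mapsto \B\Psi^{-1}$ is a bijection between positive definite diagonal matrices and positive definite diagonal matrices, and that under this map the constraint $\B\Psi \succeq \epsilon \M{I}$ is equivalent to $\B\Phi = \B\Psi^{-1} \preceq \tfrac{1}{\epsilon}\M{I}$ (using that for commuting, hence simultaneously diagonalizable, PSD matrices $\M{A}\succeq \M{B}\succ \M{0}$ iff $\M{B}^{-1}\succeq \M{A}^{-1}$; here everything is diagonal so this is just the scalar statement $\psi_i \geq \epsilon \iff \phi_i \leq 1/\epsilon$). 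This establishes that feasible points of \eqref{obj-2-0} are in bijection with feasible points of \eqref{obj-2}.

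Next I would check that the objective values agree under this correspondence. The term $\log\det(\B\Psi) = \sum_i \log \psi_i = -\sum_i \log \phi_i$, which matches the $-\log\phi_i$ terms in $f(\B\phi)$. The term $\tr(\M{S}^*)$ in \eqref{obj-2-0} is $\tr(\B\Psi^{-1/2}\M{S}\B\Psi^{-1/2}) = \tr(\M{S}\B\Psi^{-1}) = \tr(\M{S}\B\Phi) = \sum_i s_{ii}\phi_i$, which matches the $s_{ii}\phi_i$ terms. Finally, the spectral term $\sum_{i=1}^r \left(\log(\max\{1,\lambda_i^*\}) - \max\{1,\lambda_i^*\}+1\right)$ is unchanged in form; I only need that the eigenvalues $\lambda_i^*$ are the same object in both formulations. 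In \eqref{obj-2-0} they are the eigenvalues of $\B\Psi^{-1/2}\M{S}\B\Psi^{-1/2}$, and in \eqref{obj-2} they are the eigenvalues of $\B\Phi^{1/2}\M{S}\B\Phi^{1/2}$; since $\B\Phi^{1/2} = \B\Psi^{-1/2}$ (as $\B\Phi = \B\Psi^{-1}$ and both are diagonal positive definite), these matrices are literally identical, so $\{\lambda_i^*\}$ coincide. (Alternatively one can invoke Corollary~\ref{corr-eigS*Phi=eigPhi*S} with $\B\Phi$ in place of $\B\Phi$ there.)

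Putting these three term-by-term identities together shows that for corresponding feasible points, the objective of \eqref{obj-2-0} at $\B\Psi$ equals $f(\B\phi)$ at $\B\phi$; combined with the bijection of feasible regions, this gives equivalence of the two optimization problems, and in particular that $\hat{\B\Psi}$ solves \eqref{obj-2-0} iff $\hat{\B\Phi} = \hat{\B\Psi}^{-1}$ solves \eqref{obj-2}, which is the claimed statement.

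There is no real obstacle here; the only point requiring a small amount of care is the equivalence of the constraint sets under inversion (being explicit that diagonality makes the matrix inequality reduce to coordinatewise scalar inequalities) and the bookkeeping that $\B\Psi^{-1/2}$ and $\B\Phi^{1/2}$ denote the same matrix so that the eigenvalue quantities $\lambda_i^*$ in the two displays genuinely refer to the same numbers. Everything else is a direct substitution using $\tr(\M{AB})=\tr(\M{BA})$ and $\log\det(\B\Psi^{-1}) = -\log\det(\B\Psi)$.
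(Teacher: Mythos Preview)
Your proof is correct and follows exactly the approach the paper indicates: the corollary is stated immediately after the remark that it is obtained ``by a simple change of variables $\B{\Phi}:= \B\Psi^{-1}$,'' with no further argument given, and your term-by-term verification (constraint set, $\log\det$ term, trace term, and invariance of the $\lambda_i^*$'s) is precisely that change of variables carried out in detail.
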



\begin{rem} Problem~\eqref{obj-2-0} (and Problem~\eqref{obj-2}) is a minimization problem in $\B\Psi$ (respectively,  $\M{\Phi}$), 
unlike the (rank constrained) Problem~\eqref{obj-2-0-0} with variables 
$\M{L}$ and $\M\Psi$.
Note that Problem~\eqref{obj-2} is  nonconvex due to the nonconvex objective function, though the  constraints are convex.
Corollary~\ref{corr:diff-cvx1} shows that the the objective function $f(\B\phi)$ appearing in Problem~\eqref{obj-2} is neither convex nor concave but it can be written as a difference of simple 
convex functions.
\end{rem}

\subsection{Expressing Problem~\eqref{obj-2} as a difference of convex functions}
In this section we show via Proposition~\ref{lem-diff-convex1-0} and ~\ref{lem-diff-convex1} that the objective function in Problem~\eqref{obj-2} can be written as a difference of two convex functions (Corollary~\ref{corr:diff-cvx1}). This renders the application of algorithms based on difference of convex optimization, to get good solutions to Problem~\eqref{obj-2}. 
Proposition~\ref{Spectral function} shows that when the sample covariance matrix $ \M{S} $ is full rank, the objective function in Problem~\eqref{obj-2} can be expressed
purely in terms of the eigenvalues of $\M{S}^*$.


Let  $y_{(1)} \geq \ldots \geq y_{(p)}$ be an ordering of $\{y_{i}\}_{1}^{p} \in [0,\infty)^{p}$ and define:
\begin{equation}\label{top-sum11}
H_{r}(\M{y}):= \sum_{i=1}^{r} \left( \log\left(\max \{ 1,y_{(i)} \} \right) -\max \{1,y_{(i)}\} +1 \right).
\end{equation}
The following proposition shows that $\M{y} \mapsto H_{r}(\M{y})$ is concave on $\M{y} \geq \M{0}$.

\begin{prop}\label{lem-diff-convex1-0}
 $H_{r}(\M{y})$ as defined in~\eqref{top-sum11}, is concave on 
$\M{y} \geq \M{0}$.
\begin{proof}
We first establish that $H_{r}(\M{y})$ admits the following representation:
\begin{equation}\label{point-wise-inf1}
\begin{myarray}[1.1]{l c l}
H_{r}(\M{y}) =& \min\limits_{\M{w}} & \widetilde{H}(\M{w};\M{y}):= \sum\limits_{i=1}^{p}w_{i} \left ( \log\left(\max\{1,y_{i} \}\right) - \max \{1,y_{i}\} +1 \right) \\
&\sbt  &  \sum\limits_{i=1}^{p} w_{i} = r, ~~~ 0 \leq  w_i \leq 1, i \in  [p],
\end{myarray}
\end{equation}
as the minimum (w.r.t. the optimization variable $\M{w}$) of the linear functional $\M{w} \mapsto \widetilde{H}(\M{w};\M{y})$.
To see why this is true, note that the scalar function 
$y \mapsto  \log\left(\R{max}\{1,y \}\right) -\R{max} \{1,y\} +1$ is decreasing on $y \geq 0$.
Hence the sum 
$\sum_{i=1}^{p}w_{i} \left ( \log\left(\R{max}\{1,y_{i} \}\right) -\R{max} \{1,y_{i}\} +1 \right) $ will be minimized 
for a choice: $w_{i} = 1$ whenever  $y_{i}$ is one of the top $r$ elements among $y_{1}, \ldots, y_{p}$; and $w_{i}=0$ for all other choices of $i$.
This justifies representation~\eqref{point-wise-inf1}.

For any $\Re \ni y \geq 0$, note that $y \mapsto \log\left(\R{max}\{1,y\}\right) -\R{max}\{1,y\} +1 $ is \emph{concave}. 
Hence, for every fixed $\Re^{p} \ni \M{w} \geq \M{0}$, the function 
$$(y_{1}, \ldots, y_{p}) \mapsto \sum_{i=1}^{p}w_{i}\left( \log\left(\R{max}\{1,y_{i}\}\right) -\R{max}\{1,y_{i}\} +1 \right)  $$ is concave 
on $\M{y} \geq \M{0}$. Since the pointwise infimum of a family of concave functions is concave~\cite{BV2004}, $H_{r}(\M{y})$ is concave on the nonnegative reals. 
\end{proof}
\end{prop}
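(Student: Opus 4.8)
The goal is to prove that $H_r(\M{y})$ as defined in \eqref{top-sum11} is concave on $\M{y} \geq \M{0}$. My plan rests on a standard device: expressing the ``top-$r$ sum'' of a decreasing scalar transformation as a pointwise minimum of linear functionals, and then invoking that a pointwise infimum of a family of concave functions is concave.

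First I would isolate the key scalar function $\varphi(y) := \log(\max\{1,y\}) - \max\{1,y\} + 1$ for $y \geq 0$ and record its two relevant properties: (i) $\varphi$ is nonincreasing on $[0,\infty)$ — constant and equal to $0$ on $[0,1]$, and strictly decreasing on $[1,\infty)$ since $\tfrac{d}{dy}(\log y - y + 1) = \tfrac1y - 1 \le 0$ there; and (ii) $\varphi$ is concave on $[0,\infty)$ — it is the minimum of the constant function $0$ and the concave function $y \mapsto \log y - y + 1$, hence concave (care is needed at the kink $y=1$, but since both pieces agree in value there and the function is a pointwise min of concave functions it is concave globally). Property (i) lets me reformulate $H_r$ and property (ii) delivers concavity of the ingredients.

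Next, using (i), I would argue that $H_r(\M{y}) = \min\{ \sum_{i=1}^p w_i \varphi(y_i) : \sum_i w_i = r,\ 0 \le w_i \le 1 \}$. Indeed the feasible polytope is the base of a hypersimplex and its vertices are exactly the $0/1$ vectors with exactly $r$ ones; a linear functional over a polytope attains its minimum at a vertex; and among such $0/1$ selections the sum $\sum_i w_i \varphi(y_i)$ is smallest precisely when the chosen indices correspond to the $r$ largest $y_i$ (equivalently, by (i), the $r$ smallest values of $\varphi(y_i)$), which reproduces exactly $\sum_{i=1}^r \varphi(y_{(i)})$, i.e.\ $H_r(\M{y})$. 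This is the representation \eqref{point-wise-inf1}.

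Finally, for each fixed admissible $\M{w} \geq \M{0}$, the map $\M{y} \mapsto \sum_{i=1}^p w_i \varphi(y_i)$ is a nonnegative combination of the concave (by (ii)) coordinate functions $y_i \mapsto \varphi(y_i)$, hence concave on $\M{y} \geq \M{0}$. Taking the pointwise infimum over the (fixed) family indexed by $\M{w}$ preserves concavity, so $H_r$ is concave. The only subtle point, and the one I would be most careful about, is the reduction in the previous paragraph: one must be sure the minimum over the continuous polytope is attained at a $0/1$ vertex and that ties in the $y_i$ do not cause problems (they do not — any tie-breaking among equal $y_i$ gives the same objective value, so $H_r$ is well defined and the identity holds regardless). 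Everything else is routine verification of monotonicity and concavity of the one-dimensional function $\varphi$.
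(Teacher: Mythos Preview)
Your proof is correct and follows essentially the same approach as the paper's own proof: you establish the variational representation~\eqref{point-wise-inf1} using monotonicity of the scalar function $\varphi$, then conclude via the pointwise infimum of concave functions. Your treatment is slightly more explicit (the polytope-vertex argument and the observation that $\varphi$ is a pointwise minimum of two concave functions), but the structure is identical.
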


\begin{prop}\label{lem-diff-convex1}
For any  $r \in [p]$,  the function 
\begin{equation}\label{log-linear-part-1}
\B{\phi} \mapsto h(\B\phi) := \sum_{i=1}^{r} \left ( \log\left(\R{max}\{1,\lambda_i^{*}\}\right) -\R{max}\{1,\lambda_i^{*}\} +1 \right) 
\end{equation}
 is concave on $\B\phi \geq \M{0}$; where, $\{\lambda_i^{*}\}_{1}^{p}$ are the eigenvalues of $\M{S}^*$.
\begin{proof}
Note $\B\lambda\left(\M{S}^{*}\right)=\B\lambda\left(\M{S}^{\tfrac{1}{2}} \B\Phi \M{S}^{\frac12}\right)$ (Corollary~\ref{corr-eigS*Phi=eigPhi*S}).
By a classic result due to Davis~\citep{davis-57,Lewis96convexanalysis} the following mapping
\begin{equation} \label{log-lin-part2}
\M{S} ^{\tfrac{1}{2}} \B\Phi \M{S}^{\tfrac{1}{2}} \mapsto \sum_{i=1}^{r} \left( \log\left(\R{max}\{1,\lambda_i^{*}\}\right) -\R{max}\{1,\lambda_i^{*}\} +1 \right)
\end{equation}
 is concave in $\M{S} ^{\tfrac{1}{2}} \B\Phi \M{S}^{\tfrac{1}{2}}$ if and only if the 
function~\eqref{top-sum11} is 
symmetric\footnote{A function $g(y_{1}, \ldots, y_{p}) : \Re^{p} \rightarrow \Re$ is said to be symmetric in its arguments if,  for any permutation $\pi$ of the indices $\{1, \ldots, p \}$, we have 
$g(y_{1}, \ldots, y_{p}) = g(y_{\pi(1)}, \ldots, y_{\pi(p)})$} and concave in $\M{y}$ on $\M{y} \geq \M{0}$. 
It is easy to see that the function in~\eqref{top-sum11} is symmetric and concavity follows from Proposition~\ref{lem-diff-convex1-0}.
So we conclude that the map in~\eqref{log-lin-part2} is concave in $\M{S} ^{\tfrac{1}{2}} \B\Phi \M{S}^{\tfrac{1}{2}}$. 
The linearity of the map $\B\phi \mapsto \M{S} ^{\tfrac{1}{2}} \B\Phi \M{S}^{\tfrac{1}{2}}$ implies that 
$h(\B\phi)$ is concave in $\B\phi$ on $\B\phi \geq \M{0}$. This
completes the proof of the proposition.
\end{proof}
\end{prop}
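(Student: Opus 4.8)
The goal is to show that $\B\phi \mapsto h(\B\phi) = \sum_{i=1}^{r}\bigl(\log(\max\{1,\lambda_i^{*}\}) - \max\{1,\lambda_i^{*}\} + 1\bigr)$ is concave on $\B\phi \geq \M{0}$, where $\{\lambda_i^{*}\}$ are the eigenvalues of $\M{S}^{*} = \M{\Phi}^{1/2}\M{S}\M{\Phi}^{1/2}$. The plan is to route everything through the spectral-function machinery of Davis, reducing the concavity of a function of eigenvalues of a matrix to the concavity (and symmetry) of the underlying function of the eigenvalue vector, and then invoke the already-established Proposition~\ref{lem-diff-convex1-0}. The only nontrivial bit of glue is getting the matrix argument into a form that is \emph{linear} in $\B\phi$, since $\M{\Phi}^{1/2}\M{S}\M{\Phi}^{1/2}$ is manifestly not.

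First I would use Corollary~\ref{corr-eigS*Phi=eigPhi*S} to rewrite the spectrum: $\B\lambda(\M{S}^{*}) = \B\lambda(\M{S}^{1/2}\B\Phi\,\M{S}^{1/2})$. This is the key move — the map $\B\phi \mapsto \M{S}^{1/2}\B\Phi\,\M{S}^{1/2}$ \emph{is} linear in $\B\phi$ (it is just $\B\phi \mapsto \sum_i \phi_i\, \M{S}^{1/2}\M{e}_i\M{e}_i^\top\M{S}^{1/2}$), whereas the original $\M{S}^{*}$ was quadratic-looking in $\M{\Phi}^{1/2}$. Since $h$ depends on $\B\phi$ only through the (unordered) eigenvalues of a symmetric matrix, and those eigenvalues agree for the two representations, it suffices to prove concavity of the map $\M{M} \mapsto \sum_{i=1}^{r}(\log(\max\{1,\lambda_i(\M{M})\}) - \max\{1,\lambda_i(\M{M})\} + 1)$ on symmetric PSD $\M{M}$, and then compose with the linear map $\B\phi \mapsto \M{S}^{1/2}\B\Phi\,\M{S}^{1/2}$ (composition of a concave function with a linear map is concave).

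Next I would invoke Davis's theorem (cited as~\cite{davis-57,Lewis96convexanalysis}): a spectral function $\M{M} \mapsto g(\B\lambda(\M{M}))$ on symmetric matrices is concave if and only if $g : \Re^{p} \to \Re$ is symmetric in its arguments and concave. Here $g$ is exactly $H_{r}$ from~\eqref{top-sum11}. Symmetry of $H_{r}$ is immediate — it is built from the top-$r$ order statistics, which are permutation-invariant — and concavity of $H_{r}$ on $\M{y} \geq \M{0}$ is precisely the content of Proposition~\ref{lem-diff-convex1-0}. So Davis's theorem gives concavity of~\eqref{log-lin-part2} as a function of $\M{S}^{1/2}\B\Phi\,\M{S}^{1/2}$, and composing with the linear parametrization in $\B\phi$ finishes the argument.

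The main obstacle, such as it is, is conceptual rather than computational: one must be a little careful that Davis's theorem is being applied on the right domain. The function $y \mapsto \log(\max\{1,y\}) - \max\{1,y\} + 1$ is only concave on $y \geq 0$ (not on all of $\Re$), and $H_{r}$ inherits this, so the spectral-function concavity conclusion is only valid for matrices whose spectrum lies in $[0,\infty)$ — which is fine here because $\M{S}^{1/2}\B\Phi\,\M{S}^{1/2} \succeq \M{0}$ whenever $\B\Phi \succeq \M{0}$, and the constraint region $\B\phi \geq \M{0}$ is convex, so the restriction of the linear map stays inside the PSD cone. Apart from checking that this domain bookkeeping is consistent, the proof is a clean two-step reduction; no delicate estimates are needed.
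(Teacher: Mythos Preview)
Your proposal is correct and follows essentially the same route as the paper: use Corollary~\ref{corr-eigS*Phi=eigPhi*S} to replace $\M{S}^{*}$ by $\M{S}^{1/2}\B\Phi\M{S}^{1/2}$ (linear in $\B\phi$), invoke Davis's theorem with $H_{r}$ as the symmetric concave function (Proposition~\ref{lem-diff-convex1-0}), and compose with the linear map. Your extra remark about the PSD domain is a welcome bit of care, but otherwise the argument matches the paper's proof step for step.
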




\begin{cor}\label{corr:diff-cvx1}
For any  $\B{\phi} > \M{0}$, $f(\B\phi)$
can be written as the difference of two convex  functions, $f_{i}(\B\phi), i = 1, 2$ that is:
$f(\B{\phi}) = f_1(\B{\phi}) - f_2(\B{\phi})$, where, 
\begin{equation}\nonumber
f_1(\B{\phi}) = \sum_{i=1}^{p} \left( -\log \phi_i +   s_{ii}\phi_i \right)~~~\text{and}~~~
f_{2}(\B{\phi}) = - \sum_{i=1}^{r} \left( \log(\R{max}\{1,\lambda_i^{*}\}) -\R{max}\{1,\lambda_i^{*}\} +1 \right).
\end{equation}
\begin{proof}
The convexity of $f_{1}(\B\phi)$ is easy to see. Proposition~\ref{lem-diff-convex1} implies that $f_{2}(\B\phi)$ is convex. 
\end{proof}
\end{cor}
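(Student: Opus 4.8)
The plan is to verify the two claimed convexity facts and then invoke the additive decomposition directly. The statement to prove is Corollary~\ref{corr:diff-cvx1}: for $\B\phi>\M0$ we have $f(\B\phi)=f_1(\B\phi)-f_2(\B\phi)$ with $f_1,f_2$ as displayed, and both $f_1$ and $f_2$ convex. The identity $f=f_1-f_2$ is immediate from the definition of $f(\B\phi)$ in~\eqref{obj-2}: the first sum $\sum_i(-\log\phi_i+s_{ii}\phi_i)$ is exactly $f_1$, and the remaining sum $\sum_{i=1}^r(\log(\R{max}\{1,\lambda_i^*\})-\R{max}\{1,\lambda_i^*\}+1)$ is by construction $-f_2$. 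So the only real content is convexity of each piece.

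For $f_1$: on $\B\phi>\M0$ the map $\phi_i\mapsto -\log\phi_i$ is convex (second derivative $1/\phi_i^2>0$) and $\phi_i\mapsto s_{ii}\phi_i$ is affine, so each summand is convex in its single coordinate, and a separable sum of convex functions is convex; the nonnegativity $s_{ii}>0$ (assumed of the diagonals of $\M S$) is not even needed for convexity. For $f_2$: I would simply cite Proposition~\ref{lem-diff-convex1}, which establishes that $\B\phi\mapsto h(\B\phi)=\sum_{i=1}^r(\log(\R{max}\{1,\lambda_i^*\})-\R{max}\{1,\lambda_i^*\}+1)$ is \emph{concave} on $\B\phi\ge\M0$, hence $f_2=-h$ is convex there, and in particular on the open set $\B\phi>\M0$.

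There is essentially no obstacle here: the corollary is a bookkeeping consequence of the two preceding propositions. The one point worth stating carefully is the logical chain behind $f_2$'s convexity — namely that Proposition~\ref{lem-diff-convex1-0} gives concavity and symmetry of $H_r$, Davis's theorem (as used in Proposition~\ref{lem-diff-convex1}) lifts this to concavity of the spectral function in $\M S^{1/2}\B\Phi\M S^{1/2}$, and linearity of $\B\phi\mapsto\M S^{1/2}\B\Phi\M S^{1/2}$ transfers concavity back to $\B\phi$ — but all of that is already discharged in Proposition~\ref{lem-diff-convex1}, so the proof of the corollary itself is just: ``$f_1$ is a separable sum of convex functions, hence convex; $f_2$ is convex by Proposition~\ref{lem-diff-convex1}; and $f=f_1-f_2$ by inspection of~\eqref{obj-2}.'' I would present it in exactly that one-or-two-sentence form, which is what the paper does.
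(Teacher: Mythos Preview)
Your proposal is correct and mirrors the paper's own proof exactly: convexity of $f_1$ is immediate from the separable sum of $-\log\phi_i+s_{ii}\phi_i$, and convexity of $f_2$ follows directly from Proposition~\ref{lem-diff-convex1}. The paper presents this in the same one-or-two-sentence form you describe.
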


When $\M{S}$ is full rank, i.e., $\M{S} \succ \M{0}$, then Problem~\eqref{obj-2} can be rewritten purely as a function of the eigenvalues $\{\lambda_{i}^*\}_{i \geq 1}$ -- this is established in Proposition~\ref{Spectral function}.

\begin{prop}\label{Spectral function}
If $\M{S}\succ \M{0}$ Problem~\eqref{obj-2} is equivalent to the following problem:
\begin{equation}\label{obj-full}
\begin{array}{lll}
 \mini\;\; & 
\displaystyle  \bar{f}(\B\phi):= \sum_{i=1}^{p} \left(- \log \lambda_i^{*} +   \lambda_i^{*} \right) + \sum^{r}_{i=1} \left( \log(\R{max}\{1,\lambda_i^{*}\}) -\R{max}\{1,\lambda_i^{*}\} +1 \right)   \\
\sbt \;\;\; & \M{0} \prec  \M{\Phi}=\mathrm{diag}(\phi_1, \ldots, \phi_p) \preceq \frac{1}{\epsilon} \M{I},
\end{array}
\end{equation}
where, $\{\lambda_{i}^*\}_1^{p}$ are the eigenvalues of $\B{\Phi}^{\tfrac{1}{2}} \M S \B{\Phi}^{\frac12}$.
\begin{proof}
Problem~\eqref{obj-2} is equivalent to 
minimizing $\bar{f}(\B\phi):=-\log\det(\M{S}) + f(\B{\phi})$ over $\M{0} \prec  \M{\Phi} \preceq \frac{1}{\epsilon} \M{I}.$ 
The function $\bar{f}(\B\phi)$ can be expressed as:
\begin{align}
\bar{f}(\B\phi)=&\sum_{i=1}^{p} \left(-\log \phi_i +   s_{ii}\phi_i \right) - \log\det(\M{S})  +  \sum^{r}_{i=1} \left( \log\left(\max\{1,\lambda_i^{*}\}\right) -\max\{1,\lambda_i^{*}\} +1 \right) \label{line-0-1}\\
=&- \log\det(\M{S}^*)  +   \R{tr}(\M{S}^*) + \sum^{r}_{i=1} \left( \log\left(\max\{1,\lambda_i^{*}\}\right) -\max\{1,\lambda_i^{*}\} +1 \right) \ \label{line-0-2} \\
=& \sum_{i=1}^{p} \left( - \log \lambda_i^{*} +   \lambda_i^{*} \right) + \sum^{r}_{i=1} \left( \log\left(\R{max}\{1,\lambda_i^{*}\}\right) -\R{max}\{1,\lambda_i^{*}\} +1 \right)  \label{line-0-3}
\end{align}
where, line~\eqref{line-0-2} follows from~\eqref{line-0-1} by observing that
\begin{equation}\label{equality-11}
\begin{aligned}
\sum_{i=1}^{p}( -\log(\phi_{i}) + s_{ii} \phi_i) -\log\det(\M{S}) =& -\log\det(\B\Phi) + \tr(\M{S}\B\Phi) -\log\det(\M{S}) \\
=& -\log\det(\M{S}\B\Phi) + \tr(\M{S}\B\Phi) \\
=& -\log\det(\M{S}^*) + \tr(\M{S}^*),
\end{aligned}
\end{equation}
where, $\M{S}^*=\M{ \Phi}^{\tfrac{1}{2}} \M{S} \B{\Phi} ^{\tfrac{1}{2}}$; and the last equality in~\eqref{equality-11} made use of Corollary~\ref{corr-eigS*Phi=eigPhi*S}.
Moreover, note that since $\M{S}$ is of full rank and $\B\Phi \succ \M{0}$; all the eigenvalues $\lambda_i ^{*}> 0$. 
This completes the proof.
\end{proof}
\end{prop}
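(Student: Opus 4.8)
The plan is to show Problem~\eqref{obj-2} and Problem~\eqref{obj-full} have objective functions differing by an additive constant (namely $\log\det(\M{S})$), so that their minimizers over the same feasible set $\M{0} \prec \B\Phi \preceq \tfrac{1}{\epsilon}\M{I}$ coincide. Since $\M{S} \succ \M{0}$ is fixed, $\log\det(\M{S})$ is a constant, and hence $\bar{f}(\B\phi) := f(\B\phi) - \log\det(\M{S})$ has exactly the same minimizers as $f(\B\phi)$. So everything reduces to verifying the algebraic identity claimed in lines~\eqref{line-0-1}--\eqref{line-0-3}, i.e., that $\bar{f}(\B\phi)$ equals the displayed spectral expression in~\eqref{obj-full}.

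First I would isolate the part of the objective that depends on $\B\phi$ only through the diagonal, namely $\sum_{i=1}^{p}(-\log\phi_i + s_{ii}\phi_i)$. Writing this as $-\log\det(\B\Phi) + \tr(\M{S}\B\Phi)$ and subtracting $\log\det(\M{S})$, I combine the two log-determinant terms into $-\log\det(\M{S}\B\Phi)$; using Proposition~\ref{prop-eigAB=eigBA} (or Corollary~\ref{corr-eigS*Phi=eigPhi*S}) the eigenvalues of $\M{S}\B\Phi$ coincide with those of $\M{S}^* = \B\Phi^{1/2}\M{S}\B\Phi^{1/2}$, so $\det(\M{S}\B\Phi) = \det(\M{S}^*)$ and $\tr(\M{S}\B\Phi) = \tr(\M{S}^*)$. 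This gives the intermediate form $-\log\det(\M{S}^*) + \tr(\M{S}^*)$, which is exactly equation~\eqref{equality-11}. Then, since $\M{S}\succ\M{0}$ and $\B\Phi \succ \M{0}$ on the feasible set, $\M{S}^*$ is positive definite, so all $\lambda_i^* > 0$ and $-\log\det(\M{S}^*) + \tr(\M{S}^*) = \sum_{i=1}^p(-\log\lambda_i^* + \lambda_i^*)$, which is well-defined. Adding back the (unchanged) sum $\sum_{i=1}^r(\log(\max\{1,\lambda_i^*\}) - \max\{1,\lambda_i^*\} + 1)$ yields the objective $\bar{f}$ in~\eqref{obj-full}.

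The only subtlety worth flagging is the positivity claim $\lambda_i^* > 0$ for all $i$: this is what makes the substitution $-\log\det(\M{S}^*) = -\sum_i \log\lambda_i^*$ legitimate (the expression would be $+\infty$ otherwise). It follows because $\M{S}^* = \B\Phi^{1/2}\M{S}\B\Phi^{1/2}$ is a congruence of the positive definite matrix $\M{S}$ by the invertible matrix $\B\Phi^{1/2}$ (invertibility of $\B\Phi$ is guaranteed by the strict constraint $\B\Phi \succ \M{0}$), hence $\M{S}^* \succ \M{0}$. I do not expect any real obstacle here — the proof is essentially the chain of equalities in the excerpt, the main "work" being the correct bookkeeping of the $\log\det$ terms and the invocation of Corollary~\ref{corr-eigS*Phi=eigPhi*S} to pass between $\M{S}\B\Phi$ and $\M{S}^*$. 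The equivalence of the two optimization problems then follows immediately since their objectives differ by the constant $\log\det(\M{S})$ and their feasible sets are identical.
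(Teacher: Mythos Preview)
Your proposal is correct and follows essentially the same approach as the paper's own proof: add the constant $-\log\det(\M{S})$ to $f(\B\phi)$, rewrite $\sum_i(-\log\phi_i+s_{ii}\phi_i)-\log\det(\M{S})$ as $-\log\det(\M{S}^*)+\tr(\M{S}^*)$ via Corollary~\ref{corr-eigS*Phi=eigPhi*S}, and note that $\M{S}\succ\M{0}$ together with $\B\Phi\succ\M{0}$ forces $\lambda_i^*>0$ so the spectral expression is well-defined. The only cosmetic difference is that you justify $\M{S}^*\succ\M{0}$ through the congruence argument, whereas the paper simply asserts it.
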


Proposition~\ref{Spectral function} provides an interesting alternate characterization of the
formulation presented in Corollary~\ref{corr:diff-cvx1} -- this helps us gain additional understanding of the optimization problem for ML factor analysis when $\M{S} \succ \M{0}$. 

\subsubsection{Computing subgradients} \label{sec:compute-subgrad-1}
Herein, we study the computation of (sub)gradients~\cite{rockafellar2009variational} of properties of the functions $f_{1}(\B\phi)$ and $f_{2}(\B\phi)$.
Note that $f_{1}(\B\phi)$ is differentiable. However, the convex (spectral) function $f_{2}(\cdot)$ is not differentiable. 
A subgradient  of $f_{2}(\B\phi)$ can be computed following the work of~\cite{lewis-96} on differentiability of spectral functions of 
Hermitian matrices. 
To this end, consider the representation of $H_{r}(\M{y})$ in~\eqref{point-wise-inf1};
and define the function 
$g(y)= \log\left(\R{max}\{1, y \right\}) -\R{max}\{1, y \} +1$ on $y \geq 0$. Define $\widetilde{H}_{r}(\M{y}) = - H_{r}(\M{y})$ which is a convex function in $\M{y}$.
If $\partial \widetilde{H}_{r}(\M{y})$ is a subgradient of $\widetilde{H}_{r}(\M{y})$, then it can be computed by using  Danskin's theorem:
$\partial \widetilde{H}_{r}(\M{y}) = -\sum_{i=1}^{p} \hat{w}_{i} \nabla g(y_{i}),$
where, $\hat{\M{w}}$ is a minimizer of the inner optimization task in Problem~\eqref{point-wise-inf1}; and $\nabla g(y_{i}) \in \Re^{p}$ is the gradient of $g(y_{i})$, with $i$th coordinate
given by $\nabla_{i} g(y_{i}) = \min \{ 0,  \frac{1}{y_{i}} - 1 \},$ and $\nabla_{j} g(y_{i})=0$ for all $j \neq i$.
The function $\widetilde{H}_{r}(\M{y})$ is  differentiable at $\M{y}$ iff $\hat{\M{w}}$ is unique\footnote{We note that this occurs if $g(y_{(r+1)})=g(y_{(r)})$.}. The set of all subgradients of $\widetilde{H}_{r}(\M{y})$ is
given by 
$$\text{Conv} \left( \left \{ -\sum_{i=1}^{p} \hat{w}_{i} \nabla g(y_{i}):  \text{$\hat{\M{w}}$ is a minimizer of Problem~\eqref{point-wise-inf1}} \right \} \right). $$

Let us consider a matrix $\M{A} \succeq \M{0}$, with eigen decomposition  $\M{A} = \M{V}  \diag(\B\lambda)\M{V}^\top$, and consequently consider the spectral convex function
$ \tilde{g}_{r}(\M{A}) := -\sum_{i=1}^{r} g(\lambda_{i}).$ 
Using properties of subgradients of spectral functions~\cite{lewis-96}, we have that a subgradient of $\M{A} \mapsto \tilde{g}_{r}(\M{A})$ is given by:
\begin{equation}
\partial \tilde{g}_{r}(\M{A}) = \M{V}  \diag( \partial \widetilde{H}_{r}(\B\lambda) ) \M{V}^\top,
\end{equation}
where, $ \partial \widetilde{H}_{r}(\B\lambda) $ is a subgradient of $\B\lambda \mapsto \widetilde{H}_{r}(\B\lambda)$.


If $\B{ \Phi} ^{\tfrac{1}{2}} \M{S} \B{\Phi} ^{\tfrac{1}{2}}  = \M{ U}\R{diag}(\lambda_{1}^{*},...,\lambda_{p}^{*})\M{U }^{\top} $ is the eigen decomposition of 
$\M{ \Phi} ^{\tfrac{1}{2}} \M{S} \B{\Phi} ^{\tfrac{1}{2}}$; then using the  chain rule, the $i$th coordinate of $\partial f_{2}(\B\phi)$ is given by:
\begin{equation}
\partial_{i} f_2\left(\B{\phi}\right) =  
 \left(\B{\Phi} ^{-\tfrac{1}{2}} \M{U}\M{D}_1\M{U} ^{\top} \B\Phi ^{\tfrac{1}{2}} \M{S}\right)_{ii}, \label{subgrad-exp}
\end{equation}
where, 
  $\M{D}_1 = \R{ diag }\left(\delta_1,...,\delta_p\right)$, with
\[ \delta_i = \left\{ 
  \begin{array}{l l}
    \R{max} \left \{ 0,1-\tfrac{1}{\lambda_i^{*}}\right \} & \quad \text{if $ 1 \leq i \leq r $ }\\
    0 & \quad \text{otherwise.}
  \end{array} \right.\]


\subsection{Algorithm for Problem~\eqref{obj-2}: A DC optimization approach}\label{sec-algo}
 Problem~\eqref{obj-2} is a nonconvex optimization problem with semidefinite constraints and obtaining 
 a global minimum for a general $r$ is quite challenging. We thus focus on developing efficient computational procedures 
 for obtaining good (feasible) solutions to Problem~\eqref{obj-2}.  
By Corollary~\ref{corr:diff-cvx1}, Problem~\eqref{obj-2} is equivalent to the following nonconvex optimization problem: 
\begin{equation}\label{obj-1}
\begin{aligned}
 \mini&~~~~~~f(\B{\phi})=f_1(\B{\phi})-f_2(\B{\phi}) \\
 \sbt&~~~~~~~ \B\phi \in {\mathcal C}:=\{ \B\phi: \tfrac{1}{\epsilon} \geq  \phi_{i} > 0, i \in [p]\}.
\end{aligned}
\end{equation}
We use a sequential linearization procedure: wherein, at every iteration, 
we linearize the function $f_{2}(\B\phi)$ (leaving $f_{1}(\cdot)$ as is) and solve the resultant convex problem. This is an instance of 
the well-known difference of convex optimization based algorithms~\cite{hiriart1985generalized,tuy1995dc,pham2013convergence} or DC algorithms in short. 
In the machine learning community, these methods are also known as the convex concave procedure~\cite{gert-09,cccp_03}.
These algorithms have gained significant traction in the wider optimization community, especially recently, due to their pervasive use in practice -- some excellent recent works 
on this topic include~\cite{ahn2017difference,dinh2014recent,nouiehed2017pervasiveness,pang2016computing} (see also references therein).
However, to our knowledge, ours is the first paper to use this approach in the context of ML factor analysis. We will also like to remind the reader that 
the reformulations in Section~\ref{sec:reform} play a key role in arriving 
at decomposition~\eqref{obj-1} --- this sets the stage for the application of the DC approach.


Let us formally describe the algorithm. If $\B{\phi}^{(k)} \in {\mathcal C}$ denotes the value of $\B\phi$ at the $k${\text{th}} iteration, we linearize $f_2(\B{\phi})$ at $\B{\phi}^{(k)}$ with $f_1(\B{\phi})$ unchanged and obtain a convex approximation of $f(\B{\phi})$, denoted by $F(\B\phi; \B\phi^{(k)})$; and this is given by:
\begin{equation}\label{convex-mazoriztion}
f(\B{\phi}) \approx f_1(\B{\phi})- \left( f_2\left(\B{\phi}^{(k)}\right) + \left\langle \B\nabla_{k},\B{\phi}-\B{\phi}^{(k)} \right\rangle \right) := F(\B\phi; \B\phi^{(k)}),
\end{equation}
where, $\B\nabla_{k}$ is a subgradient of $f_2 (\B{\phi} )$ at $\B{\phi}^{(k)}$ (Section~\ref{sec:compute-subgrad-1}). 
We compute $\B\phi^{(k+1)}$ as:
\begin{equation}\label{eqn:variational formulae for phi_k+1}
\B\phi^{(k+1)} \in \argmin_{ \tfrac{1}{\epsilon}\M{1}  \geq \B{\phi} > \M{0}}~~F(\B\phi; \B\phi^{(k)}) =  \argmin_{ \tfrac{1}{\epsilon}\M{1}  \geq \B{\phi} > \M{0}}~~\sum_{i=1}^{p} \left(- \log \phi_i +   s_{ii}\phi_i  - \nabla_{k,i}\phi_i\right),
\end{equation}
where, $\nabla_{k,i}$ is the $i$th coordinate of $\B\nabla_{k} \in \Re^{p}$ (for all $i \in [p]$).
The $i$th entry of $\B\phi^{(k+1)}$ is given by:
$${\phi}^{(k+1)}_{i} = \min \{ \tfrac{1}{s_{ii} - \nabla_{k,i}} ,\tfrac{1}{\epsilon} \} ~~~ \text{for}~~~ i \in [p].$$
The updates continue till some stopping criterion is satisfied. This can be in terms of the relative change in the successive objective values:
$f(\B\phi ^{(k)}) - f(\B\phi ^{(k+1)}) <   \eta |f (\B\phi ^{(k+1)})|$
 or relative changes in successive iterate values: 
$\|\B{\phi}^{(k+1)} - \B{\phi}^{(k)}\|_{2} < \eta \| \B{\phi}^{(k)}\|_{2}$;
where, $\eta>0$ denotes a pre-specified tolerance level.  
We summarize the algorithm in the following display for convenience. 


 \begin{itemize}
\item[] \underline{{\bf{Algorithm~1}}: An algorithm for Problem~\eqref{obj-1}}.
\item[] Initialize with $\B{\phi}^{(1)}\in {\mathcal C}$ and update $\B\phi^{(k)} $ using~\eqref{eqn:variational formulae for phi_k+1} until 
some stopping criterion like $f(\B\phi ^{(k)}) - f(\B\phi ^{(k+1)}) <   \eta |f (\B\phi ^{(k+1)})|$ is met.

\end{itemize}



\subsection{Computational guarantees for Algorithm~1}\label{sec:conv-ana}
We present herein, computational guarantees for Algorithm~1 in terms of the number of iterations required to deliver an approximate first order stationary point; and asymptotic convergence to a first order stationary point. 
Towards this end, we recall certain standard definitions of first order stationary conditions for Problem~\eqref{obj-1} (see for example,~\cite{pham2013convergence}). 
$\tilde{\B\phi} \in {\mathcal C}$ is said to be a first order stationary point of
Problem~\eqref{obj-1} if the following condition holds:
\begin{equation}\label{defn-fo-1}
\begin{aligned}
\tilde{\B\phi} \in& \argmin_{\B\phi} ~~ F(\B\phi; \tilde{\B\phi}) = f_{1}(\B\phi) - \langle \partial f_{2}(\tilde{\B\phi}), \B\phi -\tilde{\B\phi}  \rangle \\
                    &~~~~~~ \sbt ~~~~~ \B\phi \in {\mathcal C}= \{ \B\phi:\tfrac{1}{\epsilon} \M{1} \geq  \B\phi > \M{0}\},
\end{aligned}
\end{equation}
for some choice of a subgradient $\partial f_{2}(\tilde{\B\phi})$.
From standard optimality conditions of convex functions~\cite{rockafellar2009variational,borwein-conv}, the above condition is equivalent to saying that, there exists a subgradient $\partial f_{2}(\tilde{\B\phi})$ such that:
\begin{equation}\label{defn-fo-1-1}
 \partial f_{2}(\tilde{\B\phi}) \in  \nabla f_{1}(\tilde{\B\phi}) + {\mathcal N}(\tilde{\B\phi}; {\mathcal C}),  
 \end{equation}
where, ${\mathcal N}(\tilde{\B\phi}; {\mathcal C})$ is the normal cone to the convex set ${\mathcal C}$ at the point $\tilde{\B\phi}$.
Recall that ${\mathcal N}(\tilde{\B\phi}; {\mathcal C})$ is the convex cone of all vectors $\M{d} \in \Re^{p}$ such that
$ \langle \M{d}, \B\phi - \tilde{\B\phi}  \rangle  \geq 0$ for all $\B\phi \in {\mathcal C}$. In~\eqref{defn-fo-1-1} the 
right hand side denotes the standard Minkowski sum of a vector ($\nabla f_{1}(\tilde{\B\phi})$) and a set (${\mathcal N}(\tilde{\B\phi}; {\mathcal C})$).

Proposition~\ref{prop-diff phi goes to zero} shows that the sequence $\B\phi^{(k)}$ leads to a  decreasing sequence of objective values, where the amount of decrease is lower bounded by
the squared norm of successive difference of the iterates $\{\B\phi^{(k)}\}$.
\begin{prop} \label{prop-diff phi goes to zero}
Let $\B\phi^{(k)}$ be a sequence generated via Algorithm~1. Then, there exists $\rho \geq {\epsilon^2}$ such that for every $k \geq 1$:
 \begin{equation} \label{diff phi goes to zero}
 f\left( \B\phi ^{(k)} \right) - f\left( \B\phi ^{(k+1)} \right) \geq \frac{\rho}{2} \| \B\phi ^{(k+1)} - \B\phi ^{(k)} \|^{2}.
 \end{equation} 

\begin{proof}
From convexity of $f_{2}(\B\phi)$ we have that,
\begin{equation} \label{f_2 cvx prop}
f_{2}\left(\B\phi^{(k+1)}\right)\geq f_{2}\left(\B\phi^{(k)}\right)+\left\langle \partial f_{2}\left(\B\phi^{(k)}\right), \B\phi^{(k+1)}-\B\phi^{(k)}\right\rangle,
\end{equation}
where, $\partial f_{2}\left(\B\phi\right)$ is a subgradient of $f_{2}(\B\phi)$. Note that the function
$f_{1}(\B\phi) = \sum_{i=1}^{p} f_{1}(\phi_{i})$ is separable across the coordinates and each $\phi_{i} \mapsto f_{1}(\phi_{i})$ 
is strongly convex on the interval $(0,\frac{1}{\epsilon}]$ with coefficient of strong convexity:
$\min_{\phi \in (0,\frac{1}{\epsilon}]}  \frac{1}{\phi^2} = \epsilon^2$. If we denote $\rho~(\geq \epsilon^2)$ as the coefficient of strong convexity for the function 
$f_{1}(\B\phi)$ on $(0,\tfrac{1}{\epsilon}]^{p}$ then:
\begin{equation} \label{f_1 cvx prop}
f_{1}\left(\B\phi^{(k)}\right)\geq f_{1}\left(\B\phi^{(k+1)}\right)+ \left\langle \B\phi^{(k)}-\B\phi^{(k+1)},\nabla f_{1}\left(\B\phi^{(k+1)}\right)\right\rangle + \frac{\rho}{2}\|\B\phi^{(k+1)}-\B\phi^{(k)}\|^{2},
\end{equation} 
where, $\nabla f_{1}\left(\B\phi^{(k+1)}\right)$ is the derivative of $f_{1}(\B\phi)$.
By standard optimality conditions~\cite{borwein-conv} of Problem~\eqref{eqn:variational formulae for phi_k+1} we have:
\begin{equation}\label{geq-deriv-1}
\min \left\{ \left\langle \nabla F(\B\phi^{(k+1)}; \B\phi^{(k)}), \B\phi - \B\phi^{(k+1)}  \right \rangle :  \tfrac{1}{\epsilon} \M{1} \geq \B\phi > \M{0} \right\}  \;\;  \geq 0,
\end{equation}
where, $\nabla F(\B\phi^{(k+1)}; \B\phi^{(k)})=\nabla f_1(\B\phi^{(k+1)}) - \partial f_{2}( \B\phi^{(k)})$ is the derivative of $\B\phi \mapsto F(\B\phi; \B\phi^{(k)})$ evaluated at 
$\B\phi^{(k+1)}$. Adding~\eqref{f_1 cvx prop} and~\eqref{f_2 cvx prop}, and rearranging terms we get:
\begin{align*}
f_1\left( \B\phi^{(k+1)} \right)-f_2 \left( \B\phi^{(k+1)}\right) 
& \leq f_1\left( \B\phi^{(k)} \right)-f_2 \left( \B\phi^{(k)}\right) -
 \frac{\rho}{2}\|\B\phi^{(k+1)}-\B\phi^{(k)}\|^{2} \\
 & \phantom{=} \;\;\;\;\;\;\;\;\;\;\;\;\;\;\;\;\;\;\;\;\;\;\;\;+ \underbrace{\left\langle\B\phi^{(k+1)}-\B\phi^{(k)},\nabla f_{1}\left(\B\phi^{(k+1)} \right) -
 \partial f_{2}\left(\B\phi^{(k)}\right) \right\rangle}_{\leq 0}\\
 & \leq f_1\left( \B\phi^{(k)} \right)-f_2 \left( \B\phi^{(k)}\right) -
 \frac{\rho}{2}\|\B\phi^{(k+1)}-\B\phi^{(k)}\|^{2}.
 \end{align*}
Where, the last line follows from~\eqref{geq-deriv-1}
 by setting $\B\phi =  \B\phi^{(k)}$ in~\eqref{geq-deriv-1}.
 \end{proof}
 \end{prop}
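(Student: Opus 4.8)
The plan is to establish the descent inequality~\eqref{diff phi goes to zero} by combining three ingredients: strong convexity of $f_1$, convexity of $f_2$, and the first-order optimality of the subproblem that defines $\B\phi^{(k+1)}$. First I would record that $f_1(\B\phi) = \sum_i (-\log\phi_i + s_{ii}\phi_i)$ is separable and that each univariate piece $\phi_i \mapsto -\log\phi_i + s_{ii}\phi_i$ has second derivative $1/\phi_i^2$, which is bounded below by $\epsilon^2$ on the feasible box $(0,\tfrac1\epsilon]$; hence $f_1$ is $\rho$-strongly convex on $\mathcal C$ for some $\rho \geq \epsilon^2$. This yields the quadratic lower bound~\eqref{f_1 cvx prop} evaluated between the iterates $\B\phi^{(k)}$ and $\B\phi^{(k+1)}$.

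Next I would invoke convexity of $f_2$ (Corollary~\ref{corr:diff-cvx1}) to get the subgradient inequality~\eqref{f_2 cvx prop}, namely $f_2(\B\phi^{(k+1)}) \geq f_2(\B\phi^{(k)}) + \langle \partial f_2(\B\phi^{(k)}), \B\phi^{(k+1)} - \B\phi^{(k)}\rangle$, where $\partial f_2(\B\phi^{(k)}) = \B\nabla_k$ is exactly the subgradient used to form the convex surrogate $F(\cdot;\B\phi^{(k)})$. The third ingredient is the optimality condition for~\eqref{eqn:variational formulae for phi_k+1}: since $\B\phi^{(k+1)}$ minimizes the convex function $F(\cdot;\B\phi^{(k)})$ over the convex set $\mathcal C$, the variational inequality~\eqref{geq-deriv-1} holds, and in particular, choosing the competitor $\B\phi = \B\phi^{(k)}$ gives $\langle \nabla f_1(\B\phi^{(k+1)}) - \partial f_2(\B\phi^{(k)}), \B\phi^{(k)} - \B\phi^{(k+1)}\rangle \geq 0$, i.e.\ the inner-product term $\langle \B\phi^{(k+1)} - \B\phi^{(k)}, \nabla f_1(\B\phi^{(k+1)}) - \partial f_2(\B\phi^{(k)})\rangle \leq 0$.

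Finally I would add~\eqref{f_1 cvx prop} and~\eqref{f_2 cvx prop}, rearrange so that $f_1(\B\phi^{(k+1)}) - f_2(\B\phi^{(k+1)})$ appears on the left and $f_1(\B\phi^{(k)}) - f_2(\B\phi^{(k)})$ on the right, and observe that the leftover cross term is precisely the inner product shown to be nonpositive by the optimality condition. Dropping it only strengthens the inequality, leaving $f(\B\phi^{(k+1)}) \leq f(\B\phi^{(k)}) - \tfrac\rho2\|\B\phi^{(k+1)} - \B\phi^{(k)}\|^2$, which is the claim. The only genuinely delicate point is bookkeeping: making sure the \emph{same} subgradient $\partial f_2(\B\phi^{(k)})$ is used in the convexity inequality for $f_2$, in the definition of $F(\cdot;\B\phi^{(k)})$, and in the optimality condition~\eqref{geq-deriv-1}, so that the cross terms cancel cleanly; this is a matter of consistent notation rather than a real obstacle, and everything else is routine algebra.
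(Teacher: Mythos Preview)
Your proposal is correct and follows essentially the same argument as the paper: strong convexity of $f_1$ on $(0,\tfrac1\epsilon]^p$ with modulus $\rho\geq\epsilon^2$, the subgradient inequality for $f_2$, and the first-order optimality of~\eqref{eqn:variational formulae for phi_k+1} evaluated at the competitor $\B\phi=\B\phi^{(k)}$, combined exactly as you describe. Your remark about using the \emph{same} subgradient $\B\nabla_k$ throughout is precisely the point that makes the cross terms cancel.
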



 The above proposition says that $f(\B\phi^{(k)})$ is a decreasing sequence; and being bounded below, converges to $\hat{f}$, say. 
By the definition of a first order stationary point~\eqref{defn-fo-1},
the quantity $\|\B\phi^{(k+1)} - \B\phi^{(k)}\|_{2}$ dictates the proximity of $\B\phi^{(k)}$ to a first order stationary point; and an approximate first order stationary point. 
We have the following proposition, formalizing the rate at which the sequence $\B\phi^{(k)}$ approaches a first order stationary point.

\begin{prop}\label{prop:diff of phi goes to zero}
The sequence $f(\B\phi^{(k)})$ is decreasing and converges to $\hat{f}$. The finite time convergence rate is given by:
\begin{equation}\label{rate-conv-1}
\min_{1 \leq k \leq {\mathcal K} }  \rho\| \B\phi^{(k+1)} - \B\phi^{(k)} \|_{2}^2 \leq  \frac{2}{{\mathcal K}} \left(f(\B\phi^{(1)}) -\hat{ f} \right).
\end{equation}
\begin{proof} The proof uses~\eqref{diff phi goes to zero}. If $\Delta_{k}:= \frac{\rho}{2} \| \B\phi ^{(k+1)} - \B\phi ^{(k)} \|^{2}$ then:
\begin{equation}\label{proof-rate-1}
\begin{aligned}
{\mathcal K} \min_{1\leq k \leq {\mathcal K} } \Delta_{k} \leq& \sum_{i=1}^{\mathcal K}\Delta_{k}  \leq 
  \sum_{i=1}^{\mathcal K} \left \{ f\left( \B\phi ^{(k)} \right) - f\left( \B\phi ^{(k+1)} \right) \right \}  
 \leq &  f\left( \B\phi ^{(1)} \right) - \hat{f},
  \end{aligned}
 \end{equation}
where, the second inequality uses~\eqref{diff phi goes to zero}; and 
the final inequality used the fact that $f(\B\phi^{(k)}) \downarrow \hat{f}$. The result~\eqref{rate-conv-1} follows from combining the left and right parts of the inequality~\eqref{proof-rate-1}.
\end{proof}
\end{prop}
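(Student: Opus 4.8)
The plan is to prove Proposition~\ref{prop:diff of phi goes to zero} in two parts: first the convergence statement, then the finite-time rate~\eqref{rate-conv-1}. For the first part, I would invoke Proposition~\ref{prop-diff phi goes to zero}, which shows that $f(\B\phi^{(k)}) - f(\B\phi^{(k+1)}) \geq \frac{\rho}{2}\|\B\phi^{(k+1)} - \B\phi^{(k)}\|^2 \geq 0$, so the sequence $\{f(\B\phi^{(k)})\}$ is nonincreasing. Since Problem~\eqref{obj-2} (and hence Problem~\eqref{obj-1}) is bounded below --- this can be argued from the footnote discussion surrounding Problem~\eqref{obj-2-0-0}, or directly since on the compact set $\overline{\mathcal C}$ the function $f_1$ is bounded below and $-f_2$ is the continuous extension of a concave spectral function hence also bounded below --- a nonincreasing sequence bounded below converges to some limit $\hat f$.

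For the rate, I would introduce the shorthand $\Delta_k := \frac{\rho}{2}\|\B\phi^{(k+1)} - \B\phi^{(k)}\|^2$ and telescope. The key chain of inequalities is: ${\mathcal K}\min_{1\leq k\leq{\mathcal K}}\Delta_k \leq \sum_{k=1}^{\mathcal K}\Delta_k \leq \sum_{k=1}^{\mathcal K}\bigl(f(\B\phi^{(k)}) - f(\B\phi^{(k+1)})\bigr) = f(\B\phi^{(1)}) - f(\B\phi^{({\mathcal K}+1)}) \leq f(\B\phi^{(1)}) - \hat f$, where the first inequality is because a minimum is at most the average, the second is exactly~\eqref{diff phi goes to zero} from Proposition~\ref{prop-diff phi goes to zero}, the equality is the telescoping sum, and the final inequality uses monotone convergence $f(\B\phi^{(k)}) \downarrow \hat f$ (so $f(\B\phi^{({\mathcal K}+1)}) \geq \hat f$). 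Dividing through by ${\mathcal K}$ and substituting back $\Delta_k = \frac{\rho}{2}\|\B\phi^{(k+1)} - \B\phi^{(k)}\|^2$ and multiplying by $2$ yields~\eqref{rate-conv-1}.

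There is really no serious obstacle here since all the analytic work was done in Proposition~\ref{prop-diff phi goes to zero}; the only point requiring minor care is the lower-boundedness of $f$ needed to assert $\hat f > -\infty$, which legitimizes both the convergence claim and the right-hand side of the telescoped bound being finite. I would state this explicitly, referencing the boundedness of Problem~\eqref{obj-2-0-0} (equivalently~\eqref{obj-2}) established in Section~\ref{sec:intro1}. The telescoping and the min-versus-average step are elementary, so the proof is short; I would present it essentially as the displayed computation~\eqref{proof-rate-1} already sketched in the excerpt, filling in the justification that $f$ is bounded below and noting that~\eqref{rate-conv-1} follows by rearranging the outer two terms of~\eqref{proof-rate-1}.
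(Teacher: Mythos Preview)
Your proposal is correct and follows essentially the same approach as the paper: define $\Delta_k = \tfrac{\rho}{2}\|\B\phi^{(k+1)}-\B\phi^{(k)}\|^2$, bound $\mathcal{K}\min_k \Delta_k$ by the telescoping sum via~\eqref{diff phi goes to zero}, and use $f(\B\phi^{(k)})\downarrow \hat f$ to obtain~\eqref{rate-conv-1}. Your explicit justification that $f$ is bounded below (hence $\hat f$ is finite) is a small but welcome addition that the paper leaves implicit.
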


The above proposition states that for any tolerance $\delta>0$, there is an integer ${\mathcal K}=O(\tfrac{1}{\delta})$ such that for some $k \in [{\mathcal K}]$, the following holds: 
$\| \B\phi ^{(k+1)} - \B\phi ^{(k)} \|^{2} \leq \delta$.

The following proposition shows that all limit points of the sequence $\B\phi^{(k)}$ are first order stationary points. 
\begin{prop}\label{prop:All limit points are stationary point }
	Any limit point of the sequence $\B\phi^{(k)}$ is a first order stationary point for Problem~\eqref{obj-1}.
\end{prop}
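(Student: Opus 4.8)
The plan is to show that any limit point $\tilde{\B\phi}$ of the sequence $\{\B\phi^{(k)}\}$ satisfies the first order stationarity condition~\eqref{defn-fo-1-1}, namely that there exists a subgradient $\partial f_2(\tilde{\B\phi})$ with $\partial f_2(\tilde{\B\phi}) \in \nabla f_1(\tilde{\B\phi}) + {\mathcal N}(\tilde{\B\phi}; {\mathcal C})$. First I would note that $\{\B\phi^{(k)}\}$ lives in the compact set ${\mathcal C} = [\,? ,\tfrac{1}{\epsilon}\,]^p$ --- more precisely, Proposition~\ref{prop-diff phi goes to zero} together with the update formula $\phi_i^{(k+1)} = \min\{\tfrac{1}{s_{ii}-\nabla_{k,i}}, \tfrac{1}{\epsilon}\}$ keeps the iterates bounded away from $0$ (since $f(\B\phi^{(k)})$ is decreasing and $f_1$ blows up as any $\phi_i \to 0$, the coordinates stay in a compact subinterval of $(0,\tfrac1\epsilon]$), so a convergent subsequence $\B\phi^{(k_j)} \to \tilde{\B\phi}$ exists with $\tilde{\B\phi}$ in the interior-bounded-away region.

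Next I would use Proposition~\ref{prop:diff of phi goes to zero} (equivalently the telescoping bound~\eqref{diff phi goes to zero}): since $\sum_k \|\B\phi^{(k+1)} - \B\phi^{(k)}\|^2 \leq \tfrac{2}{\rho}(f(\B\phi^{(1)}) - \hat f) < \infty$, we get $\|\B\phi^{(k+1)} - \B\phi^{(k)}\| \to 0$, hence $\B\phi^{(k_j+1)} \to \tilde{\B\phi}$ as well. Now I would pass to the limit in the optimality condition~\eqref{geq-deriv-1} for the subproblem that defines $\B\phi^{(k_j+1)}$: we have $\langle \nabla f_1(\B\phi^{(k_j+1)}) - \partial f_2(\B\phi^{(k_j)}), \B\phi - \B\phi^{(k_j+1)} \rangle \geq 0$ for all $\B\phi \in {\mathcal C}$. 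The term $\nabla f_1$ is continuous on the relevant compact set, so $\nabla f_1(\B\phi^{(k_j+1)}) \to \nabla f_1(\tilde{\B\phi})$. For the subgradient term, I would invoke the closedness (outer semicontinuity) of the subdifferential map $\B\phi \mapsto \partial f_2(\B\phi)$ for the convex function $f_2$: the subgradients $\partial f_2(\B\phi^{(k_j)})$ used in Algorithm~1 come from the explicit formula~\eqref{subgrad-exp} and lie in a bounded set (the eigenvalues are bounded on the compact region, so the $\delta_i$ and hence the matrix $\M{D}_1$ are bounded), so by passing to a further subsequence they converge to some $\B{v}$, and by outer semicontinuity of $\partial f_2$, $\B{v} \in \partial f_2(\tilde{\B\phi})$. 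Taking the limit in the inequality yields $\langle \nabla f_1(\tilde{\B\phi}) - \B{v}, \B\phi - \tilde{\B\phi}\rangle \geq 0$ for all $\B\phi \in {\mathcal C}$, which says exactly that $\B{v} - \nabla f_1(\tilde{\B\phi}) \in -{\mathcal N}(\tilde{\B\phi};{\mathcal C})$, i.e. $\B{v} \in \nabla f_1(\tilde{\B\phi}) + {\mathcal N}(\tilde{\B\phi};{\mathcal C})$, which is~\eqref{defn-fo-1-1}.

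The main obstacle I anticipate is the subgradient-convergence step: one must be careful that the particular subgradients $\B\nabla_{k}$ selected by the algorithm (via~\eqref{subgrad-exp}, which depends on an eigendecomposition and on a minimizer $\hat{\M{w}}$ of the inner LP in~\eqref{point-wise-inf1}) behave well in the limit --- the eigendecomposition is not continuous when eigenvalues collide and the minimizer $\hat{\M{w}}$ need not be unique, so the map producing $\B\nabla_k$ is merely a measurable selection, not continuous. The clean fix is not to track the algorithmic selection rule at all, but only to use that $\B\nabla_k \in \partial f_2(\B\phi^{(k)})$ for every $k$ (which holds regardless of the selection), combine with boundedness of $\{\B\nabla_k\}$ on the compact region, extract a convergent sub-subsequence, and appeal to the graph-closedness of $\partial f_2$ --- a standard property of subdifferentials of finite convex functions~\cite{rockafellar2009variational}. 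I would also remark that $f_2$ is indeed convex and finite-valued on a neighborhood of $\tilde{\B\phi}$ (Corollary~\ref{corr:diff-cvx1}), so these classical facts apply.
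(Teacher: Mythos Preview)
Your proposal is correct and follows essentially the same route as the paper: the paper also argues that the iterates stay in a compact box $[\alpha,\tfrac{1}{\epsilon}]^p$ (via the decreasing objective and the blow-up of $f_1$ as $\phi_i\to 0$), uses $\|\B\phi^{(k+1)}-\B\phi^{(k)}\|\to 0$ to make $\B\phi^{(k_j+1)}\to\tilde{\B\phi}$, and then passes to the limit in the variational inequality $\langle \nabla f_1(\B\phi^{(k+1)})-\partial f_2(\B\phi^{(k)}),\,\B\phi-\B\phi^{(k+1)}\rangle\ge 0$ using continuity of $\nabla f_1$ together with boundedness of the chosen subgradients and the graph-closedness of $\partial f_2$ (the paper invokes this last fact as a separate proposition from~\cite{borwein-conv}). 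Your discussion of the ``obstacle'' --- that one should not track the algorithmic eigen-based selection but simply use $\B\nabla_k\in\partial f_2(\B\phi^{(k)})$, extract a convergent sub-subsequence, and appeal to outer semicontinuity --- is exactly the device the paper uses.
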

\begin{proof}
	The proof is deferred to Appendix~\ref{prop:proof of all limit points are stationary point}.
\end{proof}

\subsection{Computational Cost}
\label{sec:Computational details}
In this section, we discuss the computational cost of Algorithm~1 and techniques for computational scalability to large problems.

\smallskip

\noindent {\bf{When $n > p$:}} The main computational cost of Algorithm~1 stems from computing a subgradient of $f_{2}(\B\phi)$ which requires a low rank eigen decomposition of $\M{S}^*$.
When $p$ is small relative to $n$ it is convenient to form and work with $\M{S}^*$ (a $p \times p$ matrix).
 Creating $\M{S}$ from $\M{X}$ costs $O(np^2)$ (this operation can be done once offline).  Computing $\M{S}^*$ from $\M{S}$ costs $O(p^2)$.
A full eigendecomposition of the $p \times p$ matrix $\M{S}^*$
using dense matrix factorization methods~\cite{golub2012matrix}  costs $O(p^3)$ -- this approach can be used to handle problems with $p$ up to a few thousand ($p \approx 3,000$, for example).  Note that this cost is of the same order as obtaining the unrestricted MLE of $\B\Sigma^{-1}$ given by $\M{S}^{-1}$ (assuming $\M{S}$  is invertible). 

\smallskip

\noindent {\bf{When $p \gg n$:}} In several applications of interest, $n$ is much smaller compared to $p$---a situation that occurs commonly in the modern high dimensional regime (in microarray data analysis applications for example, $n$ is at most a few hundred and $p$ is in the order of tens of thousands). In such cases, obtaining a full eigen decomposition of $\M{S}^*$ will cost 
$O(n^2p)$, which is linear in $p$ if $n \ll p$. This follows by observing that 
$\M{S} = \frac1n\M{X}^\top \M{X}$ (where, $\M{X}$ is mean-centered)
and an eigen decomposition of $\M{S}^*$, i.e., $\B\Phi^{\frac12} \M{S} \B\Phi^{\frac12} = ( \tfrac{1}{\sqrt{n}}\M{X}\B\Phi^{\frac12})^\top (\tfrac{1}{\sqrt{n}}\M{X}\B\Phi^{\frac12})$ 
can be obtained via a SVD of the $n \times p$ matrix
$\tfrac{1}{\sqrt{n}}\M{X}\B\Phi^{\frac12}$ -- and this costs $O(n^2p)$. Note that this cost is significantly smaller than a direct eigendecomposition on the $p\times p$ matrix
$\M{S}^*$, which is prohibitively expensive for large values of $p$.

In addition, there are certain costs associated with matrix multiplications. Indeed, a careful book-keeping allows us to operate with matrices that are low rank -- we never need to create or form a dense $p \times p$ matrix -- this is beneficial from a memory standpoint.
First of all, note that the computation of $\M{X}\B\Phi^{\tfrac12}$ costs $O(np)$.
In addition, one needs to compute the diagonal entries 
of $T:=\B{\Phi}^{-\tfrac{1}{2}} \M{U}\M{D}_1\M{U} ^{\top} \B\Phi ^{\tfrac{1}{2}} \M{S}$
as in~\eqref{subgrad-exp}. Note that $T$ is a $p \times p$ matrix; however, its diagonal entries can be computed without explicitly creating the matrix $T$. 
This follows by observing that $T$ is the outer product of two low rank matrices: $T:=T_{1} T_{2}$,
where, $T_{1} = \B{\Phi}^{-\tfrac{1}{2}} \M{U}\M{D}_1$ and $T_{2} = \M{U} ^{\top} \B\Phi ^{\tfrac{1}{2}} \M{S}$ -- here, 
$T_{1} \in \Re^{p \times r}$ (recall that $r< n$); and $T_{2} \in \Re^{r \times p}$. Computing the diagonal entries of $T$ has a cost $O(pr)$.
Note that $T_{2}$ has to be computed carefully using transitivity, by observing that $T_{2}=T_{21} \M{X}$ with 
$T_{21}=\tfrac1n \M{U} ^{\top} (\B\Phi^{\tfrac{1}{2}}  \M{X}') \in \Re^{r \times n}$ --- thus we can avoid creating/storing a matrix of size $p \times p$.

\smallskip

\noindent {\bf{When both $p,n$ are large:}} When both $n,p$ are large, direct SVD factorization methods above become computationally expensive. 
Approximate schemes for large scale low rank SVD decompositions are called for.
Approximate rank $r$ eigen decompositions of $\M{S}^*$ can be computed using incremental eigendecomposition techniques as described in~\cite{brand2006fast}; or methods based on the
 Lanczos method~\cite{propack} or block power iterations~\cite{golub2012matrix}. All these methods will cost roughly $O(p^2 r)$ and can be significantly smaller (both in terms of memory and computational cost) when compared to a full eigendecomposition with cost $O(p^3)$ for $r \ll p$.

\subsection{Solutions to Problem~\eqref{obj-2-0-0} when $\epsilon \approx 0$}
\label{existance-of-ML}
The conventional version of the 
ML factor analysis optimization problem is given by:
\begin{equation}\label{obj-2-0-0-no-lbd}
\begin{aligned}
\mini~~& -\log\det(\B{\Sigma}^{-1}) + \R{tr}(\M{\Sigma}^{-1}\M{S}) \\
\sbt~~~~& \M{\Sigma} = \M{\Psi} + \M{L} \M{L}^\top,\B\Psi = \diag(\psi_{1}, \ldots, \psi_{p}) \succ \M{0},
\end{aligned}
\end{equation}
which may be interpreted as a limiting version of Problem~\eqref{obj-2-0-0} with $\epsilon \rightarrow 0+$. 
There are certain technical difficulties with Problem~\eqref{obj-2-0-0-no-lbd} as it may be unbounded below and hence a ML estimator need not exist.
This can lead to numerically unstable algorithms which are highly undesirable from a practical standpoint.   
\cite{robertson2007maximum} discuss necessary and sufficient condition for boundedness of Problem~\eqref{obj-2-0-0-no-lbd}. 
 Problem~\eqref{obj-2-0-0-no-lbd} is bounded below under the following conditions: (a) If $\M{S}$ is of full rank or (b) If $\M{S}$ is rank deficient, then 
 $r < s -1$ where, $s$ denotes the number of nonzero coordinates in the sparsest nonzero vector (i.e., a nonzero vector with the maximal number of zero coefficients) 
  in the null space of $\M{S}$. Computing the sparsest nonzero vector in a subspace, however, is a combinatorially difficult problem. 
  Note that if the sample covariance matrix corresponds to that of a continuous random variable, then Problem~\eqref{obj-2-0-0-no-lbd} is bounded below with probability one. 
 Even if Problem~\eqref{obj-2-0-0-no-lbd} is bounded below, the minimum may not be attained --- the infimum is attained when 
 some coordinates of $\B\Psi$ are set to zero. These are known as Heywood cases~\cite{robertson2007maximum} -- and they are well-known to create numerical difficulties from a computational viewpoint and also 
 misleading inference from a statistical viewpoint. 

If $\hat{\B\Psi}_{\epsilon}$ is a solution to Problem~\eqref{obj-2-0-0}; then the limiting value of $\hat{\B\Psi}_{\epsilon}$ as $\epsilon \downarrow 0+$ will give a solution to Problem~\eqref{obj-2-0-0-no-lbd}, provided it exists. 
This inspires a simple continuation scheme by considering a sequence of $\epsilon$-values converging to a 
small number $\epsilon \downarrow \epsilon'$ (with $\epsilon':=10^{-6}$, say); and using warm-starts for Algorithm~1.
Suppose, there is a subset of indices $\M{M} \subset \{1, \ldots, p\}$ 
such that, $\hat{\psi_{i}} = \epsilon'$ for all $i \in \M{M}$, and $\hat{\psi_{i}} > \epsilon'$ for all $i \notin \M{M}$, then we obtain an upper bound to Problem~\eqref{obj-2-0-0-no-lbd} by fixing 
$\psi_{i} = \epsilon'$ for all $i \in \M{M}$ and optimizing over the remaining $\psi_{i}, i \notin \M{M}$ values. This can be performed by a simple modification to Algorithm~1, 
in the update~\eqref{eqn:variational formulae for phi_k+1}, where, we update \emph{only} the $\phi_{i}$ values (recall that $\phi_{i} =  \psi_{i}^{-1}$ for all $i$) corresponding to $i \notin \M{M}$ and set the remaining $\phi_{i}$ values to $1/\epsilon'$.

\subsection{Ridge Regularization}
Instead of considering a direct lower bound on $\psi_{i}$'s as in Problem~\eqref{obj-2-0-0}, we can also consider a ridge regularized version of Problem~\eqref{obj-2-0-0-no-lbd} given by:
\begin{equation}\label{obj-3}
\mini~~~f(\B{\phi}) + \sum^{p}_{i=1}\gamma \phi_i^{2} ~~~~\sbt~~~~~\B{\phi}=\mathrm{diag}(\phi_1, \ldots, \phi_p) \succ \M{0},
\end{equation}
for some $\gamma >0$. Algorithm~1 can be adapted to Problem~\eqref{obj-3}
by changing update~\eqref{eqn:variational formulae for phi_k+1} as:
\begin{equation}\label{ridge onedim_mini_problem}
\B\phi^{(k+1)} = \argmin_{\phi_{i} > 0, \forall i} ~~~ \sum_{i\in [p]} \left\{ -\log \phi_i +   s_{ii}\phi_i + \gamma \phi_i^{2}  - \nabla_{k,i}\phi_i \right \},
\end{equation}
where, $\nabla_{k,i}$'s can be computed as in Section~\ref{sec:compute-subgrad-1}. 
The $i$th coordinate of $\B\phi^{(k+1)}$ is given by:
\begin{equation}\label{update-ridge-11}
\phi^{(k+1)}_{i} =  \frac{1}{4\gamma}\left(\nabla_{k,i} -s_{ii} + \sqrt{( s_{ii}  - \nabla_{k,i})^{2} + 8\gamma}\right),~~~ 1 \leq i \leq p.
\end{equation}
Note that
$s_{ii}  - \nabla_{k,i}$ is the $i$th diagonal of 
the matrix $\M{S} - \B\Phi^{-\tfrac{1}{2}} \M{UD}_1\M{U}^{\top} \B\Phi ^{\tfrac{1}{2}} \M{S}$, which can be rearranged as:
\begin{equation} \label{algoupdate-explained}
\B\Phi ^{-\tfrac{1}{2}}
 \left( \B\Phi ^{\tfrac{1}{2}} \M{S}  \B\Phi ^{\tfrac{1}{2}} - \M{UD}_1\M{U}^{\top} \B\Phi ^{\tfrac{1}{2}} \M{S}  \B\Phi ^{\tfrac{1}{2}}   \right) \B\Phi ^{-\tfrac{1}{2} }= \B\Phi ^{-\tfrac{1}{2}}
 \left(\M{U}{\M{\widetilde{D}}}\M{U}^{\top} \right) \B\Phi ^{-\tfrac{1}{2} } 
\end{equation}
where $\widetilde{\M{D}}$ is a diagonal matrix with diagonal entries given by
\[ {\widetilde{d}}_{ii} = \left\{ 
  \begin{array}{l l}
    1 & \quad \text{if $ 1 \leq i \leq r $  \text{ and } $\lambda_i ^{*} \geq 1$}\\
    \lambda_i ^{*} & \quad \text{otherwise. }
  \end{array} \right.\]
  This implies that the matrix in~\eqref{algoupdate-explained} is positive semidefinite; and in particular, 
the diagonal entries are nonnegative: $s_{ii} -\nabla_{k,i} \geq 0$ for all $i \in [p]$.
This implies from~\eqref{update-ridge-11} that:
${\psi}^{(k+1)}_i= \tfrac{1}{\phi^{(k+1)}_i} \geq \sqrt{2\gamma}$. 
Hence ridge regularization keeps the estimated $\psi_i$ values bounded away from zero.
A continuation scheme similar to that described in Section~\ref{existance-of-ML} can be used with $\gamma \downarrow \gamma'$ (with $\gamma' = 10^{-8}$, say) to get a good solution to Problem~\eqref{obj-2-0-0-no-lbd}.

\section{Computational Experiments} \label{sec:computational-result}

We present computational experiments on our proposed Algorithm~1 (aka \textsc{Factmle}).
We compare its performance versus other popular approaches for ML factor analysis on synthetic and real-data examples.
All computations were done in Matlab on a Mac desktop with  32 GB RAM.

\subsection{Comparison across different methods}~\label{subs:four_compare}
In this section we compare the performance of the method proposed herein: \textsc{Factmle} with leading algorithms for ML factor analysis:
\texttt{Fa}, \texttt{factoran} and \texttt{Emfact}.

\smallskip
 

\noindent {\textbf{Synthetic data generation:}}
We generated (the true) $\M{L}^0 \in \Re^{p \times r_0}$ (with $r_0\ll p$) with entries drawn iid from $N(\mu, \sigma^2)$.
For examples with $n>p$, shown in Figure~\ref{fig:Simulated Example a}, we set $ \mu = 10 $  and $ \sigma^2 = 1$.  
The (true) unique variances  $\M{\Psi}^0= \R{diag}(\psi^0_1,\psi^0_2,\ldots,\psi^0_p )$  were generated independently from an exponential distribution with mean $ 10 $. 
Once  $\B{\Psi}^0 $ and $\M{L}^0$ were generated, we created 
$\M{\Sigma}^0=\M{\Psi}^0 + \M{L}^0{\M{L}^0}^\top$. We generated $\M{X}_{n \times p}$
 from a multivariate Gaussian distribution with mean $\M{0}$ and covariance $\B\Sigma$.
The results were averaged over ten different replications of $\M{X}$.


\smallskip

\noindent {\textbf{Competing methods:}}
We compared our proposed method: Algorithm~1 (\textsc{Factmle}) with the following popular ML methods for Factor Analysis:

\begin{itemize}
\item[1.] \texttt{Factoran:} This is the widely used, native implementation of ML factor analysis in {Matlab}; and this code is based on the seminal work of~\cite{joreskog1967some}.
\item[2.] \texttt{Emfact:}  This is the widely popular EM algorithm based technique; and the specific implementation is based on~\cite{bai2012statistical}.
\item[3.] \texttt{Fa:}  This is an EM algorithm based technique for factor analysis~\cite{bishop06:_patter_recog_machin_learn}\footnote{This function is available as a part of 
Matlab's PRML toolbox~\url{https://www.mathworks.com/matlabcentral/fileexchange/55883-probabilistic-pca-and-factor-analysis?focused=6047050&tab=function} }. 
\end{itemize}
Of the above three methods, \texttt{Factoran} and \texttt{Emfact} apply only when $n > p$ -- thus we restrict our attention to {\texttt{Fa}} as the only competing method to 
\textsc{Factmle} for $n <p$.
 We do not include the method of~\cite{JMLR:v18:15-613} in our comparisons, since it optimizes a different criterion (not the maximum likelihood objective). In terms of scalability considerations, the method of~\cite{JMLR:v18:15-613}
is less scalable (since it requires performing the eigen decomposition of an unstructured $p \times p$ matrix) than \textsc{Factmle} especially if $n$ is small. 
The standard implementation of \texttt{Factoran} uses correlation matrices; hence we transformed the  results obtained from \texttt{Fcatoran} for comparison with other algorithms.

\smallskip

\noindent {\textbf{Performance measures:}}  All algorithms are compared in terms of the quality of solution obtained, i.e., the objective value -- the criteria they are all set to optimize. We consider the negative log-likelihood ${\mathcal L}(\hat{\B\Sigma}) := - \log\det({\hat{\M\Sigma}}^{-1}) + \R{tr}({\hat{\M\Sigma}}^{-1}\M{S}),$
where, ${\hat{\M\Sigma}} = \hat{\M{\Psi} }  + {\hat{\M L} \hat{{\M L}}^\top}$, upon verifying that the estimates are feasible.

In addition, we also consider the run-times of the different algorithms. 
This however, depends upon the different convergence criteria employed by the different algorithms (as we explain below); and the quality of solution obtained. 
We note that the quality of solution (in terms of objective value) can be different for different algorithms since the optimization problem is nonconvex. 
Thus run-time of an algorithm is interpreted in the context of the objective value attained in our experiments. 

 Finally, we note that  Algorithm~1 can readily incorporate warm-starts --- they may be useful if 
 one desires a sequence of solutions to Problem~\eqref{obj-2-0-0} for different values of $r = 1, 2, \ldots $. 
 Other algorithms like: \texttt{Fa}, \texttt{Factoran} and \texttt{Emfact} do not allow for warm-start specification; and we used their default initialization strategy.

\smallskip

\noindent {\bf{Comparison results:}} We first consider a synthetic dataset with $n>p$. Figure~\ref{fig:Simulated Example a} shows the performances of different methods for synthetic data generated as above, with  $p=200, n=2200, r^0=8$, and different choices of $r$. We compare the performance of the different algorithms in terms of their default convergence criteria. 

For \textsc{Factmle}, the tolerance level $\eta$ (for convergence based on objective value difference, as explained in Section~\ref{sec-algo}) was set to $10^{-4}$; we set the maximum number of iterations to be 1000.
 For all other we choose their default convergence criteria
with maximum number of iterations set to $1000.$ 
Figure~\ref{fig:Simulated Example a} suggests that the performance of \textsc{Factmle} measured in terms of the negative log-likelihood is significantly better than all the other algorithms -- thereby suggesting that it does well in the task it was set to accomplish. The performance of \texttt{Emfact}, in terms of negative log-likelihood, is comparable to \textsc{Factmle} when $r$ is small. However, \texttt{Emfact} often encounters numerical difficulties (especially when $r$ is large) and produces negative 
 estimates of ${\B\Phi}$ -- this violates the condition $ \B\Phi \succ \B{0} $ and is highly undesirable from a statistical standpoint.  
 Consequently, one should be cautious while using this method.
A highly attractive feature of \textsc{Factmle} is the timings. The number of iterations required in \texttt{Fa}, \texttt{Emfact} and $\texttt{Factoran}$ is much larger than that of \textsc{Factmle} (note that in addition, their objective values are worse as well). For example, for $p=200,r=6$ the (average) number of iterations for \texttt{Fa}, \texttt{Emfact},\texttt{Factoran} were around 1000 (i.e., the maximal iteration limit), but for \textsc{Factmle} it was less than twenty.
 We observe that the difference among {\textsc{Factmle}} and its competitors is more pronounced for larger values of $r$. 
 In terms of timings, {\textsc{Factmle}} is a clear winner --- and the differences are more pronounced with larger values of $p$ (as we see in subsequent experiments). 
 The performance \texttt{Fa} is better than \texttt{Factoran} and \texttt{Emfact}; and seems to be the only competitor to {\textsc{Factmle}}.
We perform a more detailed experiment comparing the performance of \texttt{Fa} with Algorithm~1 for two different types of initializations: with warm start and with random initialization in 
Figure~\ref{fig:Simulated Example a} (bottom panel).  We see that in both cases \textsc{Factmle} outperforms \texttt{Fa} in terms of quality of solution and also run-times. The timings of \textsc{Factmle} with warm start is  found to be slightly better than \textsc{Factmle} with cold start.  

 We also took some other values of $(n,p,r)$ but  the results were found to be quite similar; and hence we do not report them here.

\begin{figure}[h!]
\centering
\resizebox{0.95\textwidth}{0.3\textheight} {\begin{tabular}{r c r c }

\rotatebox{90}{\sf {\scriptsize{~~~~~~~negative log-likelihood $ \mathcal{L}(\cdot) $ } }}  & 
\includegraphics[width=0.45\textwidth,height=0.22\textheight,  trim =4.0cm 9.2cm 4.5cm 9.cm, clip = true ]{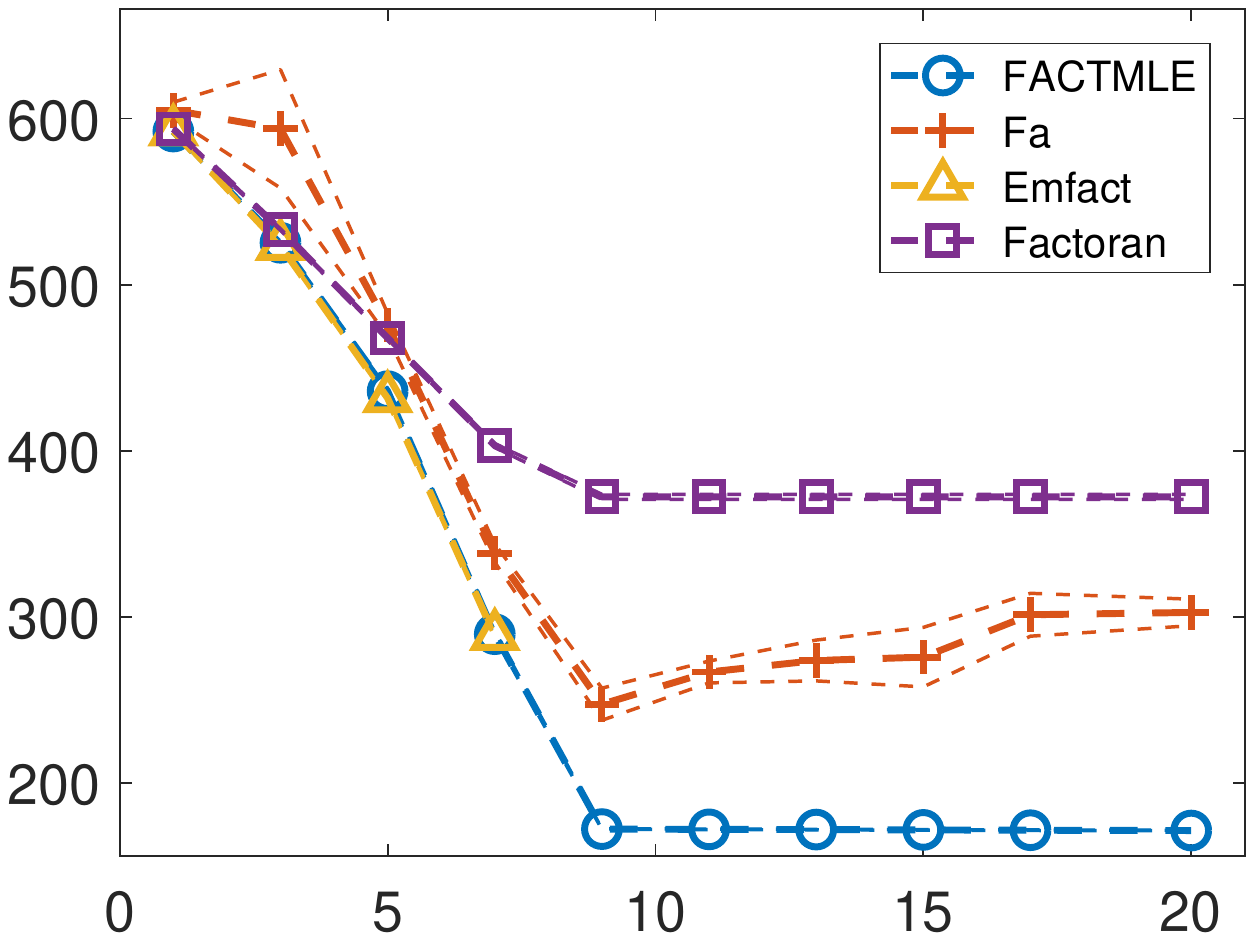} 

& \rotatebox{90}{\sf {\scriptsize{~~~~~~~~~~~~~~~Time in sec }}}  & \includegraphics[width=0.45\textwidth,height=0.22\textheight,  trim =4.2cm 9.2cm 4.5cm 9.cm, clip = true ]{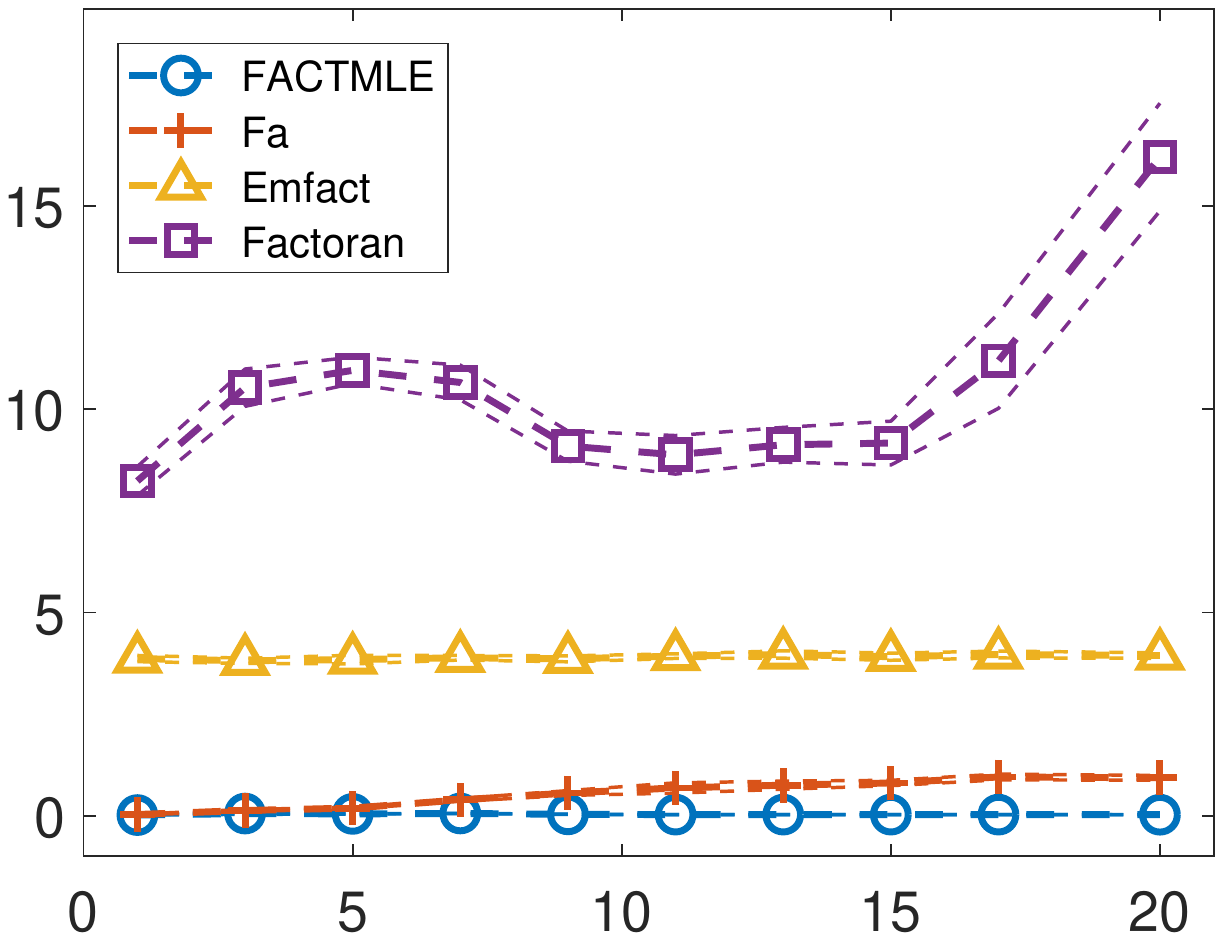} \\
& \sf {\scriptsize{Number of factors}} &  & \sf {\scriptsize{Number of factors}}  \smallskip \smallskip\\

& {\scriptsize{Zoomed Plot: \textsc{Factmle} vs {\texttt{Fa}} }} &  & {\scriptsize{Zoomed Plot: \textsc{Factmle} vs {\texttt{Fa}} }}   \\

 \rotatebox{90}{\sf {\scriptsize{~~~~~negative log-likelihood $ \mathcal{L}(\cdot) $ } }}  & \includegraphics[width=0.45\textwidth,height=0.22\textheight,  trim =4.0cm 9.2cm 4.5cm 9.cm, clip = true] {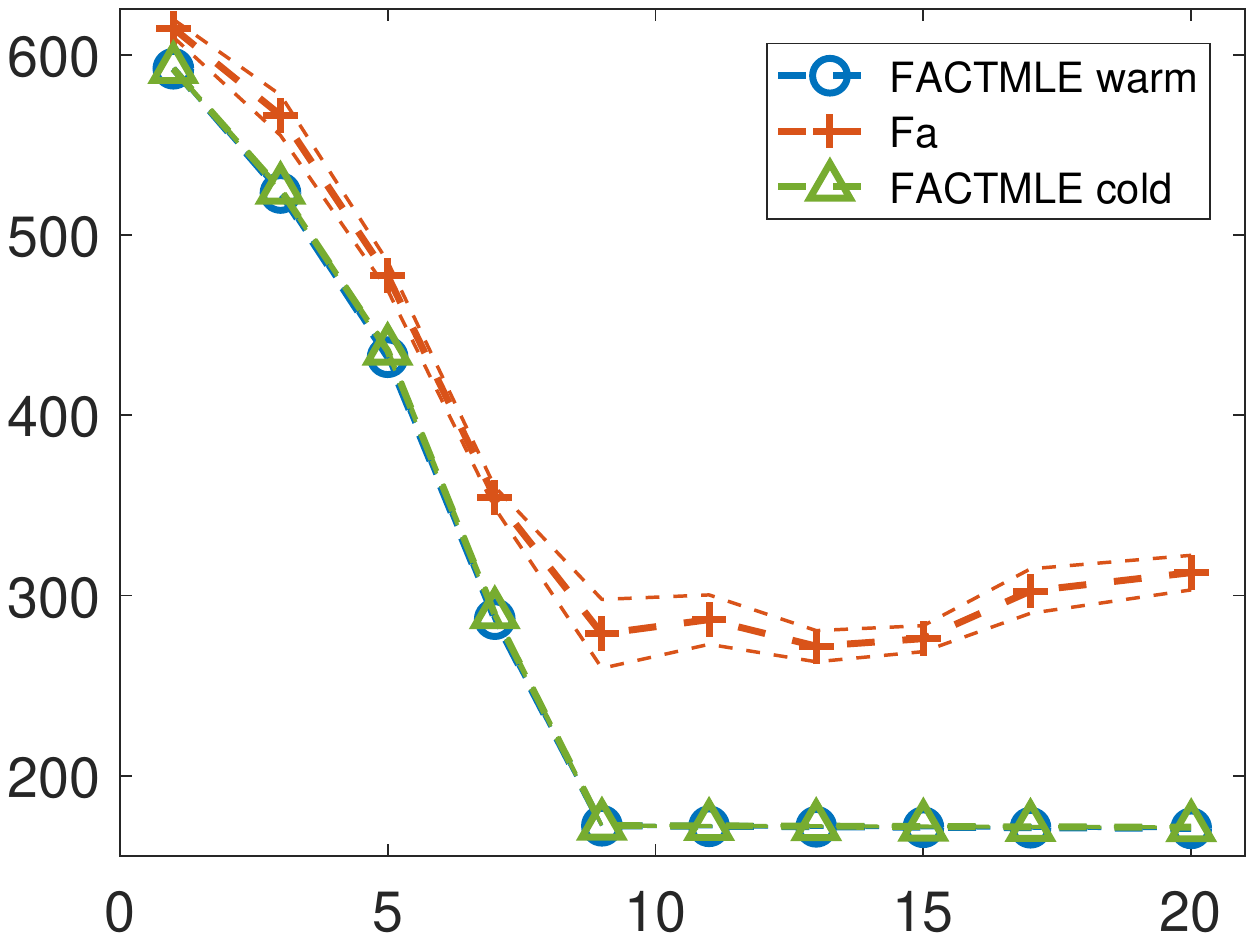} 

& \rotatebox{90}{\sf {\scriptsize{~~~~~~~~~~~~~~~Time in sec }}}  & \includegraphics[width=0.45\textwidth,height=0.22\textheight,  trim =4.2cm 9.2cm 4.5cm 9.cm, clip = true]{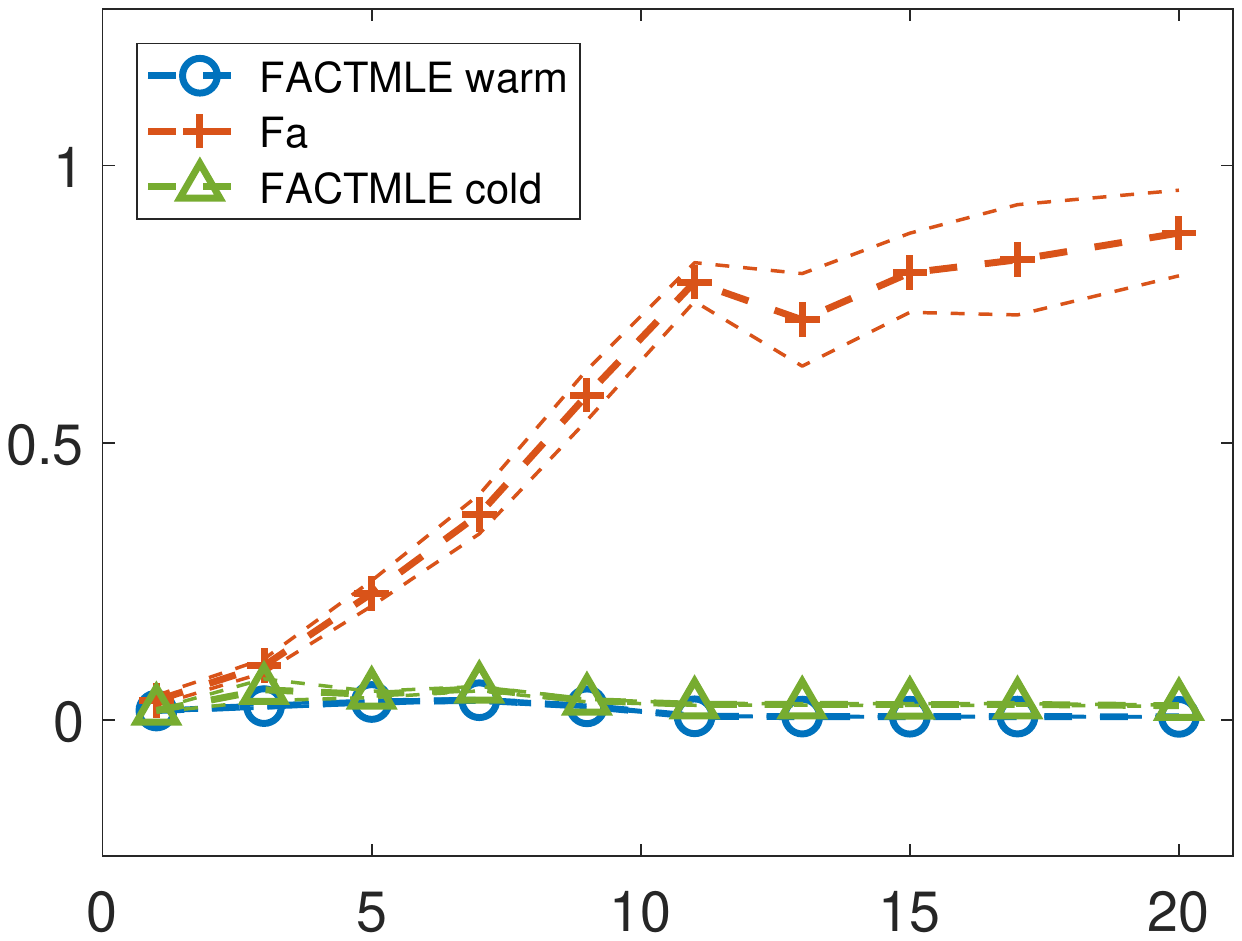} \\

 & \sf {\scriptsize{Number of factors}} &  & \sf {\scriptsize{Number of factors}}

\end{tabular}}
\caption{ {\small{ Figure showing performances of different ML methods for Factor Analysis for synthetic data with $(n,p) = (2200,200)$.  
In the two figures in first row, we compare
the performance of \texttt{Fa}, \textsc{Factmle}, \texttt{Factoran} and \texttt{Emfact} in terms of negative log-likelihood and run-time. We see that our method \texttt{Factmle}
outperforms all other methods, both in  terms of run-time and negative log-likelihood value. \texttt{Emfact} produces negative estimates of $ \B\Phi $ in some cases, and in those cases the result obtained from \texttt{Emfact} do not appear in the plots. The lower panel shows a zoomed-in version of \texttt{Fa} (the best competitor of \textsc{Factmle})
 versus \textsc{Factmle} with warm start and without warm-start (denoted as ``cold''). 
  We see that the performance of \textsc{Factmle} (with or without warm start) is better than that of \texttt{Fa}. \textsc{Factmle}
 with warm start gives slightly better run-time. Results are averaged over ten replications and the bands represent point-wise one standard error bars.  
  } } } \label{fig:Simulated Example a}
\end{figure}

\subsubsection{Further comparisons with \texttt{Fa}} 

In our experience, we systematically observed that among  \texttt{Fa}, \texttt{Factoran} and \texttt{Emfact};  \texttt{Fa} always 
emerged as the winner (see also Figure~\ref{fig:Simulated Example a}) in terms of numerical stability,  quality of solutions delivered and  run-times.
We also note that \texttt{Fa} is the only method among the three which applies for both $n>p$ and $n\leq p$.
Hence we perform a more detailed comparison between \texttt{Fa} and our proposed method: \textsc{Factmle}. 
We consider four different datasets in our experiments one synthetic and the other real -- this includes both the cases 
$n >p$ and $p >n$.  
For all the numerical examples in this section, we had the same convergence criterion for 
both \texttt{Fa} and \textsc{Factmle}.
We ran both \texttt{Fa} and \textsc{Factmle} for a maximum of 2000 iterations and tolerance threshold 
of $\eta=10^{-8}$ (as described in the display of Algorithm~1). If $f_k(\text{alg})$ denotes the objective value (negative log-likelihood) for method $\text{alg} \in \{ \texttt{Fa},\textsc{Factmle}\}$ at iteration $k$;
then we obtain the best objective value across all the methods and set it to $f_*$. We then study the first time at which 
an algorithm ``$\text{alg}$'' reaches a tolerance level of ``Tol'':
$(f_k(\text{alg}) - f_*)/|f_*| \leq \text{Tol}$.

We considered the following datasets in our experiments.
   
 \smallskip


\noindent {\bf{Example a:}}  (Simulated Data) $(p>n)$: We consider 3 synthetic datasets where, 
the number of covariates $p$ is larger than the number of samples $n$; we took:
$(n,p)= (500,5000)$, $(n,p)= (50,10^4)$ and $(n,p)= (150,10^4)$.
The data was simulated as per the setup mentioned in Section~\ref{subs:four_compare}, with 
the entries $\phi^0_{i}$ drawn iid from an exponential distribution with mean $ 1 $. We set $r^{0} = 5$.
We considered a sequence of solutions for the ML factor analysis problem for 15 equi-spaced values of 
$r \in [1, 18]$.
 
  \smallskip

\noindent {\bf{Example b:}} (Phoneme Data), $(n>p)$:
The data were extracted from the TIMIT database a widely
used resource for research in speech recognition. The data was downloaded from the companion 
website~\footnote{Available at:~\url{https://web.stanford.edu/~hastie/ElemStatLearn/datasets/phoneme.data}} of the textbook~\cite{FHT-09} -- it consists
of 4509 log-periodograms of length 256, i.e. in this example $ (n,p) = (4509,256)$. 
We considered a sequence of solutions for the ML factor analysis problem for 18 equi-spaced values of 
$r \in [1, 27]$. The sample covariance matrix $\M{S}$ was poorly conditioned. 
The condition number of $\M{S}$ was $\sim 3.9\times10^{3}$.

 \smallskip




\noindent {\bf{Example c:}} (ZipCode Data), $(n>p)$:
This is the by now famous ZipCode dataset\footnote{Available at~\url{https://web.stanford.edu/~hastie/ElemStatLearn/datasets/zip.test.gz}} which was 
generated by scanning normalized handwritten digits by the U.S. Postal Service. These are $16 \times 16$ grayscale images corresponding to digits (0-9) that are normalized/deslanted.
The images were vectorized, and we created a data matrix comprising of digits $0$ and $6$.
The data matrix $\M{X}_{n \times p}$ had dimensions $(n,p) = (1858,249)$. 
We considered a sequence of solutions for the ML factor analysis problem for 15 equi-spaced values of 
$r \in [1, 17]$.
The condition number of $\M{S}$ here was $\sim 3.8 \times 10^{17}$.


 \smallskip

\noindent {\bf{Example d:}} (Cancer Data), $(p>n)$:
This is a high-dimensional microarray dataset available from\footnote{Data available at \url{http://statweb.stanford.edu/~tibs/ElemStatLearn/datasets/14cancer.info}}
 the companion website of~\cite{FHT-09}. 
It consists of gene expression measurements for $p=16063$ genes from $n=144$ individuals.
We considered a sequence of solutions for the ML factor analysis problem for 15 equi-spaced values of 
$r \in [1, 22]$.
The condition number of $\M{S}$ here was $\sim 10^{20}$.

\smallskip

For Examples~b,c,d, the condition number of $\M{S}$ was quite high --- we took $\epsilon = 10^{-10}$ (in Problem~\eqref{obj-2-0-0}) and Algorithm~1 was provided with this choice of $\epsilon$. For Algorithm~1, 
we did not use the continuation strategy as described in Section~\ref{existance-of-ML}. For the synthetic dataset (Example~a), 
we took $\epsilon = 10^{-3}$. In Algorithm~1, the diagonal entries of $\B\Phi$ were drawn from a uniform $[0,1]$ distribution for the case $r=1$; and solutions for other values of 
$r$ were obtained via warm-starts. For {\texttt{Fa}} we used its default initialization scheme since it does not allow for warm-starts.

For all cases with $n<p$, the low rank SVD step in Algorithm~1 was performed according to the description given 
in Section~\ref{sec:Computational details} (for the case $p\gg n$). For $n>p$, we used the low rank SVD method of 
Section~\ref{sec:Computational details} (for the case $n > p$).

Table~\ref{tab:simus-rest} shows the results for all the four examples. We observe that \textsc{Factmle} clearly works extremely well in terms of obtaining a good objective value in much smaller run-times, compared
to {\texttt{Fa}}. What is most important however, is that  \textsc{Factmle}  is numerically robust --- in fact much more stable and reliable than {\texttt{Fa}} which often encounters problems with convergence.
In many cases, as soon as $r$ is not too small, {\texttt{Fa}} is found to be trapped in poor fixed points with suboptimal objective values. These are referred to by the moniker ``>*'' in Table~\ref{tab:simus-rest}. 
We note that these datasets are challenging as the condition numbers associated with them are extremely large --- however, we do expect to encounter datasets of this form often in real-life settings, thus having a good robust algorithm is of paramount importance. Our proposed algorithm  \textsc{Factmle} does seem to meet this expectation. 



\begin{table}[h!]
\begin{tabular}{ c c }
\begin{tabular}{|c | c | c|}  
\multicolumn{3}{c}{Synthetic Data $(n=500,p=5000)$} \\ 
\multicolumn{3}{c}{Time (in secs)}  \\
\hline 
Tol   & {\texttt{Fa}} &  {\textsc{Factmle}}   \\                                      
\hline                                                                                                            
$ 10^{-2} $ &>*83.142    (4.174)& 4.497    (0.041)   \\                                                   
\hline                                                                                                            
$ 10^{-3} $ &   >*107.713   (2.148)& 7.702    (2.508)\\                                              
\hline                                                                                                            
$ 10^{-4} $ & >*112.145    (1.157)& 18.490    (5.706)\\                                               
\hline                                                                                                            
$ 10^{-5} $   & >*112.146    (1.157)& 29.369    (6.154)   \\   \hline                                                                                                            
\end{tabular} 
&
\begin{tabular}{|c|c|c|}  
  \multicolumn{3}{c}{Synthetic Data $(n=150,p=10^4)$}\\ 
  \multicolumn{3}{c}{Time (in secs)}  \\ \hline  
Tol   & {\texttt{Fa}} &  {\textsc{Factmle}}   \\                                      
\hline                                                                                                            
$ 10^{-2} $ &  >*48.489    (4.605)& 1.680    (0.027)  \\                                                   
\hline                                                                                                            
$ 10^{-3} $ &  >*90.445    (1.601)&1.920    (0.048)   \\                                                   
\hline                                                                                                            
$ 10^{-4} $ &   >*93.004    (0.774)&   4.393    (0.592)   \\                                                  
\hline                                                                                                            
$ 10^{-5} $ &  >*93.004    (0.773)&     6.662    (0.764)  \\   \hline                                                                                                            
\end{tabular} 
\\~\\
\begin{tabular}{|c|c|c|}      
 \multicolumn{3}{c}{Synthetic Data $(n=50,p=10^4)$}\\ 
 \multicolumn{3}{c}{Time (in secs)}  \\ \hline 
 Tol  & {\texttt{Fa}} &  {\textsc{Factmle}}   \\                                      
\hline                                                                                                            
$ 10^{-2} $ & >*43.794 ~(1.160) & 0.558 ~(0.097)   \\                                                   
\hline                                                                                                            
$ 10^{-3} $ & >*50.798 ~(0.712) & 0.882 ~(0.228)   \\                                                   
\hline                                                                                                            
$ 10^{-4} $ & >*52.669 ~(0.362) & 1.901 ~(0.375)   \\                                                  
\hline                                                                                                            
$ 10^{-5} $ & >*52.959 ~(0.296) & 3.798 ~(0.496)   \\   \hline                                                                                                            
\end{tabular} 
&
\begin{tabular}{|c|c|c|} 
 \multicolumn{3}{c}{Phoneme Data $(n,p) = (4509,256)$}\\ 
 \multicolumn{3}{c}{Time (in secs)}  \\ \hline 
 Tol & {\texttt{Fa}} &  {\textsc{Factmle}}  \\ \hline                                                                     
$ 10^{-2} $ & 3.038 ~(0.385)& 0.125 ~(0.006)  \\             
\hline                                                                     
$ 10^{-3} $ & >*23.902~(1.624)& 0.187 ~(0.007)  \\            
\hline                                                                     
$ 10^{-4} $ & >*29.635 ~(1.430)& 0.420 ~(0.007)  \\            
\hline                                                                     
$ 10^{-5} $ & >*37.035 ~(1.245)& 0.669 ~(0.008) \\            
\hline                                                                     
\end{tabular}
\\~\\
\begin{tabular}{|c|c|c|}    
            \multicolumn{3}{c}{ZipCode Data $(n,p) = (1858,249)$}\\
            \multicolumn{3}{c}{Time (in secs)}  \\ \hline 
Tol  & {\texttt{Fa}} &  {\textsc{Factmle}}   \\
\hline                                                                     
$ 10^{-2} $ & 1.558 ~(0.060) & 0.111 ~(0.007)   \\             
\hline                                                                     
$ 10^{-3} $ & 5.822 ~(0.099) & 0.171 ~(0.008)   \\             
\hline                                                                     
$ 10^{-4} $ & >*8.673 ~(0.100) & 0.445 ~(0.010)   \\             
\hline                                                                     
$ 10^{-5} $ & >*10.803 ~(0.099) & 0.640 ~(0.012)     \\            
\hline                                                                     
\end{tabular} 
&
\begin{tabular}{|c|c|c|}    
  \multicolumn{3}{c}{Cancer Data $(n=144,p=16063)$}\\
  \multicolumn{3}{c}{Time (in secs)}  \\ \hline 
 Tol                        & {\texttt{Fa}} &  {\textsc{Factmle}}   \\
\hline                                                                     
$ 10^{-2} $ & 1.055 ~(0.016) & 3.816 ~(0.014)   \\         
\hline                                                                     
$ 10^{-3} $ & 12.021 ~(0.099) & 5.346 ~(0.040)    \\        
\hline                                                                     
$ 10^{-4} $ & >*81.979 ~(0.355) & 8.516 ~(0.050)  \\         
\hline                                                                     
$ 10^{-5} $ & >*94.586 ~(0.357) & 12.431 ~(0.060)    \\     \hline                                                                     
\end{tabular}
\end{tabular}
\caption{\small{Performance of \textsc{Factmle} and \texttt{Fa} for different real and synthetic datasets; as described in the text.  We show the times (secs) taken by different algorithms to compute the entire path of 
solutions for different values of $r$, as specified in the text. 
In all the above examples, a symbol ``>*'' means that the corresponding 
algorithm did not converge to the specified tolerance level for multiple values of $r$ and replications. This usually does not happen when $r$ is small (say less than 5), but is indeed a common problem whenever $r$ becomes larger.  \textsc{Factmle} is seen to be a clear winner across all instances. The results are averaged over 10 replications with standard errors in parenthesis. }}\label{tab:simus-rest}
\end{table}



\section{Generalizing beyond a diagonal $\B\Phi$}\label{gen-err-structure}
The most conventional version of FA concerns a diagonal matrix and the focus of the paper is on this case.
However, we show that our proposed framework can also naturally adapt to more general constraints on $\B\Phi$.
Towards this end, we have the following remark:
\begin{rem}\label{gen-Phi}
For any $r \in [p]$, it follows from the proof of Proposition~\ref{lem-diff-convex1}, that  for any $\B\Phi \succeq \M{0}$ (not necessarily diagonal) the function 
$$\B\Phi \mapsto F_{2}(\B\Phi):=\sum_{i=1}^{r} \left ( \log\left(\R{max}\{1,\lambda_i^{*}\}\right) -\R{max}\{1,\lambda_i^{*}\} +1 \right),$$ 
where, $\{\lambda_i^{*}\}_{1}^{p}$ are the eigenvalues of $\M{S} ^{\tfrac{1}{2}} \B\Phi \M{S}^{\tfrac{1}{2}}$ is concave in $\B\Phi \succeq \M{0}$.
\end{rem}
For a general $\B\Psi \succ \M{0}$, the maximum likelihood FA problem:
\begin{equation}\label{obj-2-0-0-gen}
\begin{aligned}
\mini~~~ - \log\det(\B\Sigma^{-1}) + \R{tr}(\M\Sigma^{-1}\M{S}) ~~~\sbt~~\M{\Sigma} = \M{\Psi} + \M{L} \M{L}^\top,    \B{\Psi} \succ \M{0}, 
\end{aligned}
\end{equation}
with optimization variables $\B\Sigma, \M{L}, \B\Psi$ (with same dimensions as before)
is equivalent to 
\begin{equation}\label{obj-2-0-0-gen-1}
\mini~~~H(\B\Phi):= F_{1}(\B\Phi) - F_{2}(\B\Phi) ~~~ \sbt ~~~~ \B{\Phi} \succ \M{0}, 
\end{equation}
where, $F_{1}(\B\Phi) := - \log\det(\B\Phi) + \R{tr}(\B\Phi\M{S})$; and $F_{2}(\B\Phi)$ is as defined above (Remark~\ref{gen-Phi}). 
Note that $F_{1}(\B\Phi)$ and $F_{2}(\B\Phi)$ are convex in $\B\Phi$. However, in the context of FA it is meaningful to consider 
additional regularization on $\B\Phi$. For example, $\B\Phi$ can be block-diagonal; banded or have entries with small $\ell_{1}$-norm (aka $\ell_{1}$-sparsity).

In presence of additional convex constraints on $\B\Phi$, i.e., 
say, $\B\Phi \in \texttt{X}$ the difference 
of convex optimization procedure described in Section~\ref{sec-algo} readily applies. Algorithm~1 gets modified to the following convex problem with semidefinite constraints
\begin{equation}\label{update-gen-case-1}
\B\Phi^{(k+1)} \in \argmin \{   F_{1} (\B\Phi) - \langle \partial F_{2} (\B\Phi^{(k)}), \B\Phi \rangle : \B\Phi \in \texttt{X}, \B\Phi \succ \M{0}\}. 
\end{equation}
Clearly, the efficiency of this procedure depends upon how efficiently the subproblem~\eqref{update-gen-case-1} can be solved -- this depends
upon $\texttt{X}$. If $\B\Phi$ is banded, block diagonal, or has small $\ell_{1}$-norm, $\texttt{X}$ can be described by a polyhedral set. 

A special structure of $\B\Phi$ is when it is block diagonal $\B\Phi = \text{Blk}\diag(\B\Phi_{1}, \ldots, \B\Phi_{m})$ where, 
$\B\Phi_{i} \in \Re^{p_{i} \times p_{i}}$ is the $i$th block, for $i =1, \ldots, m$. 
If $\texttt{X} = \{ \B\Phi: \B\Phi \succeq \M{0}\}$, then the optimization problem~\eqref{update-gen-case-1} splits into 
$m$ different blocks; and the solution for the $i$th block is given by:
$ \B\Phi_{i}^{(k+1)} = (\M{S}_{ii} - \B\nabla_{k,ii})^{-1},$
where, $\M{S}_{ii}$ (and $\B\nabla_{k,ii}$) are the $i$th blocks of $\M{S}$ (respectively, $\B\nabla_{k}$):
\begin{equation}
\M{S} =\left[\begin{array}{cccc}
\M{S}_{11}&\M{S}_{12}&\cdots &\M{S}_{1m}\\
\vdots & &\ddots &\vdots \\
\M{S}_{m1}&\M{S}_{m2}&\cdots &\M{S}_{mm}\\
\end{array}\right] ~~\text{and}~~ 
\B{\nabla}_{k}= \left[\begin{array}{cccc}
\B{\nabla}_{k,11}&\B{\nabla}_{k,12}&\cdots &\B{\nabla}_{k,1m}\\
\vdots & &\ddots &\vdots \\
\B{\nabla}_{k,m1}&\B{\nabla}_{k,m2}&\cdots &\B{\nabla}_{k,mm}\\
\end{array}\right]
\end{equation}
where,  $\M{S}_{ij}$ (and $\B{\nabla}_{ij}$) is $p_i \times p_j$ submatrix of $\M{S}$ (respectively, $\B{\nabla}_{k}$). 
Note that $\B\nabla_{k}$ is a subgradient of $F_{2}(\B\Phi)$ w.r.t. $\B\Phi$ evaluated at $\B\Phi^{(k)}$.

One can also consider an $\ell_{1}$-norm sparsity on the entries of $\B\Phi$, akin to graphical lasso~\cite{FHT2007a,BGA2008} commonly used in learning sparse inverse covariance matrices. 
In this case, update~\eqref{update-gen-case-1} leads to an optimization problem of the form:
$$ \mini ~~ -\log\det(\B\Phi) + \langle \M{S} - \nabla_{k}, \B\Phi \rangle + \lambda \sum_{ij} |\phi_{ij}| ~~~\sbt ~~~ \B\Phi \succeq \M{0},~~~ \phi_{ij} \geq 0~~\forall i,j; $$
which can be solved using adaptions of standard algorithms for the graphical lasso -- see~\cite{FHT2007a,FHT-09,BGA2008}.

\begin{appendix}
\section{Appendix} \label{appendix}

\begin{prop}\label{prop-contty-1}(See Section~6 in~\cite{borwein-conv})
Suppose the function $g: \M{E} \mapsto (-\infty, \infty)$ is convex, and the point 
$x$ lies in interior of dom(g) with $\M{E} \subset \Re^{m}$. If $x^{r} \rightarrow x$ (where, $x^r,x \in \M{E}$) and $\nu^{r} \rightarrow \nu$; as $r \rightarrow \infty$, where, $\nu^r$ is a subgradient of $g$ evaluated at 
$x^r$. Then $\nu$ is a subgradient of 
$g$ evaluated at $x$.
\end{prop}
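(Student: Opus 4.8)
The plan is to pass to the limit in the subgradient inequality. By definition of a subgradient, for every $r$ and every $y \in \M{E}$ we have
\[
g(y) \;\geq\; g(x^{r}) + \langle \nu^{r},\, y - x^{r}\rangle .
\]
If $y \notin \mathrm{dom}(g)$ the left-hand side is $+\infty$ and there is nothing to prove, so we may assume $g(y)$ is finite. The goal is to show that the right-hand side converges to $g(x) + \langle \nu,\, y-x\rangle$ as $r \to \infty$; taking $r \to \infty$ in the displayed inequality would then give $g(y) \geq g(x) + \langle \nu,\, y-x\rangle$ for all $y \in \M{E}$, which is precisely the statement that $\nu \in \partial g(x)$.

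Two ingredients are needed for that passage to the limit. First, the affine term: since $\nu^{r}\to\nu$ and $x^{r}\to x$ (hence $y-x^{r}\to y-x$), and the inner product is jointly continuous on $\Re^{m}\times\Re^{m}$, we get $\langle \nu^{r},\, y-x^{r}\rangle \to \langle \nu,\, y-x\rangle$. Second — and this is the only nontrivial point — we need $g(x^{r})\to g(x)$. This is where the hypothesis $x \in \mathrm{int}(\mathrm{dom}\,g)$ is used: a finite convex function on $\Re^{m}$ is continuous (indeed locally Lipschitz) on the interior of its effective domain, a classical fact (see the cited reference); since $x^{r}\to x$, the points $x^{r}$ eventually lie in a fixed neighbourhood of $x$ contained in $\mathrm{int}(\mathrm{dom}\,g)$, so continuity at $x$ gives $g(x^{r})\to g(x)$. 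Finiteness of $g(x^{r})$ for large $r$ is automatic, since $\partial g(x^{r})\neq\emptyset$ forces $g(x^{r})<\infty$.

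Combining the two limits and letting $r\to\infty$ in the subgradient inequality yields $g(y)\geq g(x)+\langle \nu,\,y-x\rangle$ for every $y\in\M{E}$, i.e. $\nu\in\partial g(x)$, completing the argument.

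The main (and essentially only) obstacle is the continuity claim $g(x^{r})\to g(x)$; the rest is a one-line limit computation. Since the proposition is quoted from the literature, I would simply invoke the standard local continuity property of convex functions on the interior of their domain rather than reprove it, and carry out the limiting argument above.
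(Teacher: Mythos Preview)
Your argument is correct and is the standard proof that the graph of the subdifferential of a convex function is closed (relative to the interior of the domain): write the subgradient inequality at $x^{r}$, use local continuity of $g$ on $\mathrm{int}(\mathrm{dom}\,g)$ to get $g(x^{r})\to g(x)$, use joint continuity of the inner product to handle the affine term, and pass to the limit. Nothing is missing.

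As for comparison with the paper: there is nothing to compare. The paper does not prove this proposition; it merely states it and cites the reference (Section~6 of \cite{borwein-conv}) as the source. So your write-up actually supplies more detail than the paper does.
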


\subsection{Proof of Proposition~\ref{prop:All limit points are stationary point }}
\label{prop:proof of all limit points are stationary point}

Note that the objective function $f(\B\phi)$ (see~\eqref{obj-2}) is unbounded above when 
$\phi_{i} \rightarrow 0$ for any $i \in [p]$ -- see also Proposition~\ref{bdd-away-from-zero}. This implies that there exists a $\alpha>0$ such that 
$\p{(k)} \in [\alpha, \tfrac1\epsilon]^p$ for all $k$ (sufficiently large).
The boundedness of $\p{(k)}$ implies the existence of a limit point of $\p{(k)}$, say, $\p{*}$.
Let $\p{(k_j)}$ be a subsequence such that $ \p{(k_j)} \rightarrow \p{*}$ as $j \rightarrow \infty$. 
Note that for every $k$, $\B\phi^{(k+1)} \in \argmin_{\B\phi \in \mathcal C} F(\B\phi; \B\phi^{(k)})$ is equivalent to
\begin{equation}\label{subseq-11}
\left\langle \nabla f_{1}(\B\phi^{(k+1)}) - \partial f_{2} ( \B\phi^{(k)}), \B\phi - \B\phi^{(k+1)} \right\rangle \geq 0~~\forall 
\B\phi \in {\mathcal C}.
\end{equation}
Now consider the sequence $\B\phi^{(k_j)}$ as $j\rightarrow \infty$. Using the fact that 
$\B\phi^{(k+1)} - \B\phi^{(k)} \rightarrow \M{0}$; it follows from the continuity of $\nabla f_{1}(\cdot)$ 
that:  $\nabla f_{1}(\B\phi^{(k_j+1)}) \rightarrow  \nabla f_{1}(\B\phi^{*})$ as $j \rightarrow \infty$.

Note that $ \partial f_2(\B\phi^{(k_j)})$ (see~\eqref{subgrad-exp}) is bounded as 
$\p{(k_j)} \in [\alpha, \tfrac1\epsilon]^p$. Passing onto a further subsequence $\{k'_j\}$ if necessary, 
it follows that $\partial f_2(\B\phi^{(k'_j)}) \rightarrow  \vartheta$. Using Proposition~\ref{prop-contty-1},
we conclude that $\vartheta$ is a subgradient of $f_2$ evaluated at $\B\phi^*$. 
As $k'_j \rightarrow \infty$, the above argument along with~\eqref{subseq-11} implies that:
\begin{equation}\label{subseq-12}
\left\langle \nabla f_{1}(\B\phi^{*}) - \partial f_{2} ( \B\phi^{*}), \B\phi - \B\phi^{*} \right\rangle \geq 0~~\forall 
\B\phi \in {\mathcal C}, 
\end{equation}
 where, $\partial f_{2} ( \B\phi^{*})$ is a subgradient of $f_{2}$ evaluated at $\B\phi^{*}$.
 \eqref{subseq-12} implies that $\B\phi^*$ is a first order stationary point. 
\qed

\end{appendix}
\bibliographystyle{chicago}
\bibliography{paper1_dbm,rahul_dbm}
\end{document}